\newcommand{\be}{\begin{equation}}
\newcommand{\ee}{\end{equation}}
\newcommand{\beq}{\begin{eqnarray}}
\newcommand{\eeq}{\end{eqnarray}}
\newcommand{\cM}{\mathcal{M}}
\def\R{{\mathfrak R}}
\newtheorem{prop}{Proposition}[section]
\newtheorem{theo}[prop]{Theorem}
\newtheorem{lemm}[prop]{Lemma}
\newtheorem{rema}[prop]{Remark}
\newtheorem{exam}[prop]{Example}
\newtheorem{defi}[prop]{Definition}
\def\begeq{\begin{equation}}
\def\endeq{\end{equation}}
\def\R{\mathbb R}
\def\tr{{\rm tr}}
\def\d{\delta}
\def\a{\alpha}
\def\s{\sigma}
\def \ds{\displaystyle}
\def \vs{\vspace*{0.1cm}}
\def\odot{\setbox0=\hbox{$\bigcirc$}\relax \mathbin {\hbox
to0pt{\raise.5pt\hbox to\wd0{\hfil $\wedge$\hfil}\hss}\box0 }}
\numberwithin{equation} {section}
\begin{document}

\title[Gauss-Bonnet-Chern mass] {A new mass for asymptotically flat manifolds  }

\author{Yuxin Ge}
\address{Laboratoire d'Analyse et de Math\'ematiques Appliqu\'ees,
CNRS UMR 8050,
D\'epartement de Math\'ematiques,
Universit\'e Paris Est-Cr\'eteil Val de Marne, \\61 avenue du G\'en\'eral de Gaulle,
94010 Cr\'eteil Cedex, France}
\email{ge@u-pec.fr}
%\thanks{Research of the first author was supported in part by NSERC Grant OGP-0046732.}
\author{Guofang Wang}
\address{ Albert-Ludwigs-Universit\"at Freiburg,
Mathematisches Institut
Eckerstr. 1
D-79104 Freiburg}
\email{guofang.wang@math.uni-freiburg.de}

\author{Jie Wu}
\address{School of Mathematical Sciences, University of Science and Technology
of China Hefei 230026, P. R. China
\and
 Albert-Ludwigs-Universit\"at Freiburg,
Mathematisches Institut
Eckerstr. 1
D-79104 Freiburg
}
\email{jie.wu@math.uni-freiburg.de}
\thanks{The project is partly supported by SFB/TR71
``Geometric partial differential equations''  of DFG. The first author  is partly supported by ANR  project
ANR-08-BLAN-0335-01.}
\maketitle

\section{Introduction}

One of the important results in differential geometry is the Riemannian positive mass theorem (PMT): {\it Any asymptotically flat Riemannian manifold $\cM^n$ with a suitable decay order
and with nonnegative scalar curvature
has  nonnegative ADM mass.} % Moreover, equality holds if and only if the manifold $\cM^n$  is isometric to the Euclidean space $\R^n$ with the standard metric.}
 This theorem was proved by Schoen and Yau \cite{SY1} for manifolds of dimension $n\le 7$ using
a minimal hypersurface argument and by Witten \cite{Wit} for any spin manifold. See also \cite{PT}.
 For locally conformally flat manifolds the proof was given in \cite{SY2}
using the developing map.
Very recently, the PMT was proved by  by Lam \cite{Lam} for any  asymptotically flat Riemannian manifold $\cM^n$ which is represented by a graph in $\R^{n+1}$.
For general  higher dimensional manifolds, the proof of the  positive mass theorem was announced by Lohkamp \cite{Loh} by an argument extending the minimal hypersurface argument of Schoen and Yau and  by Schoen in \cite{Schoen2}. There are many generalizations of the positive mass theorem.
For example, a refinement of the PMT, the Riemannian Penrose inequality was proved by Huisken-Ilmanen \cite{HI} and Bray \cite{BP} for $n=3$
and by Bray and Lee \cite{BL} for  $n\le 7$. See also the excellent surveys  on the Riemannian Penrose inequality of Bray \cite{Bray} and Mars \cite{Mars}.

The ADM mass was introduced by Arnowitt,  Deser, and Misner \cite{ADM}
for asymptotically flat Riemannian manifolds. A complete  manifold $(\cM^n,g)$ is said to be an asymptotically flat (AF) of order $\tau$ (with one end) if there is a compact set $K$ such that
$\cM\setminus K$ is diffeomorphic to  $\mathbb{R}^n\setminus B_R(0)$ for some $R>0$ and in the standard coordinates in $\mathbb{R}^n$, the metric $g$ has the following expansion
 $$g_{ij}=\delta_{ij}+\sigma_{ij},$$
 with $$|\sigma_{ij}|+ r|\partial\sigma_{ij}|+ r^2|\partial^2\sigma_{ij}|=O(r^{-\tau}),$$
 where $r$ and $\partial$ denote the Euclidean distance and the standard derivative operator on $\mathbb{R}^n$ with the standard metric $\delta$, respectively.
The ADM mass is defined by
\begin{equation}\label{m1}
m_1(g):=m_{ADM}:=\frac 1{2(n-1)\omega_{n-1}}\lim_{r\to\infty}\int_{S_r}(g_{ij,i}-g_{ii,j})\nu_j dS,\end{equation}
where $\omega_{n-1}$ is the volume of $(n-1)$-dimensional standard unit sphere and  $S_r$ is the Euclidean coordinate sphere, $dS$ is the volume element on $S_r$ induced by the Euclidean metric, $\nu$ is the outward unit normal vector to $S_r$ in $\mathbb{R}^n$ and $g_{ij,k}= \partial_kg_{ij}$
are  the ordinary partial derivatives.

In a seminal paper Bartnik  \cite{Bar} proved that the ADM mass is well-defined for asymptotically flat Riemannian manifolds with a suitable decay  order $\tau$ and it is a geometric invariant. Precisely, it does not depend on the choice of the coordinates, provided
\begin{equation}\label{tau1}\tau>\frac {n-2}2.\end{equation}
  With this restriction, the ADM mass cannot be defined for many other interesting  asymptotically flat Riemannian manifolds. For example, the
following metric
\begin{equation}\label{metric}
g_{\rm Sch}^{(2)}=\bigg(1-\frac{2m}{\rho^{\frac n2-2}}\bigg)^{-1} d\rho^2+\rho^2d\Theta^2=\bigg(1+\frac {{m}}{2r^{\frac{n-4}2}}\bigg)^{\frac {8}{n-4} }g_{{\mathbb R}^n},
\end{equation}
plays an important role as the Schwarzschild metric in the (pure) Gauss-Bonnet gravity \cite{CTZ}.
Its decay order is $\frac {n-4} 2$, which is smaller than $\frac{n-2} 2$. Here $d\Theta^2$ is the standard metric on ${\mathbb S}^{n-1}$.
For the discussion of this metric and more general Schwarzschild type metrics, see Section $6$ below.

It is well-known that the ADM mass is very closely related to the scalar curvature. In fact, on an asymptotically
flat manifold with decay  order $\tau$, the scalar curvature has the following expression \cite{Schoen1}
\[R(g)=\partial_j(g_{ij,i}-g_{ii,j})+O(r^{-2\tau-2}).\]
  From this expression one can check that
\[\lim_{r\to \infty}\int_{S_r}(g_{ij,i}-g_{ii,j})\nu_j dS,\]
is well defined, provide that $\tau >\frac {n-2}2$
and $R$ is integrable. This term gives the ADM mass after a normalization. From this interpretation one can easily see the mathematical meaning of the ADM mass.
This also motivates us to introduce a new mass by using the following  second Gauss-Bonnet curvature\footnote{The second named author would like to thank Professor Schoen for his suggestion
to use this way to define a mass by using the $\sigma_k$-curvature ($k\ge 2$) in Toronto in 2005.}
\[L_2
=\|Rm\|^2-4\|Ric\|^2+R^2= \|W\|^2+8(n-2)(n-3)\s_2,\]
where $Rm$, $Ric$ and $W$ denote  the Riemannian curvature tensor, Ricci tensor and the Weyl tensor respectively, and $\s_2$ is the so-called $\s_2$-scalar curvature. More discussion about the Gauss-Bonnet curvature and the $\s_2$-scalar curvature
will be given in the next section.
Throughout the paper we use the Einstein summation convention.

\begin{defi}[Gauss-Bonnet-Chern Mass]\label{m2} Let $n\ge 5$.
Suppose that $(\cM^n,g)$ is an asymptotically flat manifold of decay order
\begin{equation}\label{eq0.1}
\tau>\frac{n-4}{3},
\end{equation}
 and the second Gauss-Bonnet curvature $L_2$ is integrable on $(\cM^n,g)$. We define the Gauss-Bonnet-Chern mass by
\begin{equation}\label{mass}
m_2(g):=m_{GBC}(g)=c_2(n)\lim_{r\rightarrow\infty}\int_{S_r}P^{ijkl}\partial_l g_{jk} \nu_{i}dS,
\end{equation}
where
\begin{equation}\label{constant}
c_2(n)=\frac 1 {2(n-1)(n-2)(n-3)\omega_{n-1}},
\end{equation} $\nu$ is the outward unit normal vector to $S_r$, $dS$ is the area element of $S_r$ and the tensor $P$ is defined by
$$P^{ijkl}=R^{ijkl}+R^{jk}g^{il}-R^{jl}g^{ik}-R^{ik}g^{jl}+R^{il}g^{jk}+\frac12R(g^{ik}g^{jl}-g^{il}g^{jk}).$$
\end{defi}
We remark that when $n=4$ one can also define the $m_2$ mass, but in this case (i.e., $n=4$) $m_2$ always vanishes. See also the discussion in Section 8.
In fact one can easily check that $m_2$ vanishes for asymptotically flat manifolds of decay order larger than $\frac{n-4}2$.
 Hence the ordinary Schwarzschild
metric
 $$g_{\rm Sch}^{(1)}=\bigg(1-\frac{2m}{\rho^{n-2}}\bigg)^{-1} d\rho^2+\rho^2d\Theta^2=\bigg(1+\frac {m}{2r^{n-2}}\bigg)^{\frac {4}{n-2} }g_{{\mathbb R}^n},$$
 considered in the Einstein gravity has a vanishing GBC mass whenever it can be defined, for it has the decay order $\tau=n-2>\frac{n-4}2$.
For the metric given in (\ref{metric}) one can check that the Gauss-Bonnet mass $m_2(g)=m^2$, which is nonnegative. See Section $6$ below.

Our work is partly motivated by the study of the $\sigma_2$-curvature and partly by the study of Einstein-Gauss-Bonnet gravity,
in which there is a similar mass defined for the Gauss-Bonnet gravity
\begin{equation}\label{eq0.2} R+\Lambda+\a L_2,\end{equation}
where $\Lambda$ is the cosmological constant and $\a$ is a parameter.
In contrast, if one considers only the term $L_2$, it is called the pure Gauss-Bonnet, or pure Lovelock gravity in physics. The study of Einstein-Gauss-Bonnet gravity
was initiated by the work of Boulware, Deser, Wheeler \cite{BD}, \cite{Wh}.
A mass for  (\ref{eq0.2}) was introduced by  Deser-Tekin \cite{DT1} and \cite{DT2}. See also \cite{DM,Pa,CO} and especially \cite{CTZ} and references therein.

\

Similar to  the work of Bartnik for the ADM mass, we first show that the GBC mass $m_2$ is a geometric invariant.

\begin{theo}\label{mainthm}
Suppose that $(\cM^n,g)$ $(n\geq5)$ is an asymptotically flat manifold of decay order
$
\tau>\frac{n-4}{3}$
 and $L_2$ is integrable on $(\cM^n,g)$. Then the Gauss-Bonnet-Chern mass $m_2$ is well-defined and does not depend on the choice of the  coordinates used in the definition.
\end{theo}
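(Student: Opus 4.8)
The plan is to carry out, for the fourth-order integrand $P^{ijkl}\partial_l g_{jk}$, the two-step scheme Bartnik used for the ADM mass \cite{Bar}: first that the limit in \eqref{mass} exists, then that it is chart-independent. Both steps rest on one pointwise identity near infinity, valid in any asymptotically flat chart: writing $g=\delta+\sigma$,
\[
\partial_i\bigl(P^{ijkl}\partial_l g_{jk}\bigr)=\tfrac12\,L_2+O(r^{-3\tau-4}).
\]
I would prove this from three ingredients: (1) the algebraic identity $L_2=P^{ijkl}R_{ijkl}$, immediate from the definitions and the symmetries of $R$; (2) the contracted Bianchi (Lovelock) identity $\nabla_i P^{ijkl}=0$, which — since the Christoffel symbols of $g$ are $O(r^{-\tau-1})$ and $P=O(r^{-\tau-2})$ — improves the crude bound to $\partial_i P^{ijkl}=O(r^{-2\tau-3})$; (3) linearization bookkeeping: splitting $R=R^{(1)}+R^{(2)}$, $P=P^{(1)}+P^{(2)}$ into parts linear, resp.\ at least quadratic, in $\sigma$ (so $R^{(2)},P^{(2)}=O(r^{-2\tau-2})$), one has $P^{(1)ijkl}R^{(1)}_{ijkl}=2P^{(1)ijkl}\partial_i\partial_l\sigma_{jk}$ by the symmetries of $P^{(1)}$, and combining this with $\partial_i(P^{ijkl}\partial_l g_{jk})=P^{ijkl}\partial_i\partial_l\sigma_{jk}+(\partial_iP^{ijkl})\partial_l\sigma_{jk}$ and with $P^{(1)}R^{(1)}=L_2-P^{(1)}R^{(2)}-P^{(2)}R^{(1)}-P^{(2)}R^{(2)}$ gives the claim, every discarded term being $O(r^{-3\tau-4})$. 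Since $\tau>\frac{n-4}3$ is the same as $3\tau+4>n$, this remainder is integrable on the end, and $L_2$ is integrable by hypothesis, so the Euclidean divergence of $V^i:=P^{ijkl}\partial_l g_{jk}$ lies in $L^1$ near infinity; the divergence theorem then shows $\int_{S_{r_2}}V^i\nu_i\,dS-\int_{S_{r_1}}V^i\nu_i\,dS\to0$ as $r_1,r_2\to\infty$, so the limit exists.

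For chart-independence, let $\{x^i\}$ and $\{y^a\}$ be two asymptotically flat charts on the end, with transition map $\psi=y\circ x^{-1}$. By (a mild adaptation of) Bartnik's lemma on asymptotically flat coordinates, $\psi$ is $C^2$-asymptotic to a Euclidean rigid motion; composing with that motion changes nothing, since a rotation leaves the integrand of \eqref{mass} invariant and maps $S_r$ to itself, while a translation shifts the flux over $S_r$ by the integral of $\tfrac12L_2+O(r^{-3\tau-4})$ over a shell of volume $O(r^{n-1})$, hence by $o(1)$. Thus I may assume $\psi(x)=x+\zeta(x)$ with $\zeta=O(r^{1-\tau})$, $\partial\zeta=O(r^{-\tau})$, $\partial^2\zeta=O(r^{-\tau-1})$ (reduce $\tau$ slightly to a non-integer value if necessary). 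Writing $V^{(x)},V^{(y)}$ for the field $V$ in the two charts, the cofactor (divergence-compatible) transformation rule for fluxes gives $\int_{S_r^{(y)}}V^{(y)}\!\cdot\nu_y\,dS_y=\int_{\Sigma_r}Z\cdot\nu_x\,dS_x$, where $\Sigma_r:=\psi^{-1}(S_r^{(y)})$ and $Z^i:=\det(D\psi)\,(D\psi^{-1})^i_a\,(V^{(y)a}\!\circ\psi)$. Using $D\psi=I+O(r^{-\tau})$, the tensoriality of $P$, and the transformation rule for coordinate derivatives of the metric — in the $y$-chart they equal those in the $x$-chart minus the pure-gauge term $\partial_d\partial_b\zeta_c+\partial_d\partial_c\zeta_b$ plus $O(r^{-2\tau-1})$ — one finds on the end
\[
Z^i=V^{(x)i}+V^{\mathrm{gauge},i}+Q^i,\qquad V^{\mathrm{gauge},i}:=-P^{ijkl}\partial_j\partial_l\zeta_k,\qquad Q^i=O(r^{-3\tau-3}),
\]
the companion term $P^{ijkl}\partial_d\partial_c\zeta_b$ having dropped out by antisymmetry of $P$ in its last two slots.

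It then remains to let $r\to\infty$ in the three pieces. For $V^{(x)}$: $\Sigma_r$ and $S_r^{(x)}$ bound a region contained in the shell $\{\,r-Cr^{1-\tau}\le|x|\le r+Cr^{1-\tau}\,\}$, and the integral over it of $\partial_iV^{(x)i}=\tfrac12L_2+O(r^{-3\tau-4})$ is $o(1)$ (the $L_2$-part by absolute continuity; the rest because the shell has volume $O(r^{n-\tau})$ and $\tau>\frac{n-4}4$), so $\int_{\Sigma_r}V^{(x)}\!\cdot\nu_x\,dS_x\to F^{(x)}$, the $x$-chart value of the limit. For $Q$: $\mathrm{Area}(\Sigma_r)=O(r^{n-1})$ and $Q=O(r^{-3\tau-3})$ give a bound $O(r^{n-3\tau-4})\to0$ — here the hypothesis $\tau>\frac{n-4}3$ enters at full strength. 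For $V^{\mathrm{gauge}}$: since $\partial_iV^{\mathrm{gauge},i}=-(\partial_iP^{ijkl})\partial_j\partial_l\zeta_k=O(r^{-3\tau-4})$ (the other term vanishes, $\partial_i\partial_j$ symmetric against $P^{ij\cdots}$ antisymmetric), one may again replace $\Sigma_r$ by $S_r^{(x)}$ at cost $o(1)$; and writing $V^{\mathrm{gauge},i}=-\partial_j(P^{ijkl}\partial_l\zeta_k)+(\partial_jP^{ijkl})\partial_l\zeta_k$, the flux of the first term over the closed hypersurface $S_r^{(x)}$ is zero for \emph{every} $r$, because $T^{ij}:=P^{ijkl}\partial_l\zeta_k$ is antisymmetric and $\partial_jT^{ij}$ is, up to the Hodge star, the differential of the $(n-2)$-form $\star\omega$ with $\omega=\tfrac12 T_{ij}\,dx^i\wedge dx^j$, so its flux through a boundaryless hypersurface vanishes; the flux of the second term is $O(r^{n-3\tau-4})\to0$. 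Hence $\int_{\Sigma_r}V^{\mathrm{gauge}}\!\cdot\nu_x\,dS_x\to0$, and altogether $F^{(y)}=F^{(x)}$, which is the theorem.

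The main obstacle is precisely this gauge term: the non-rigid part $\partial^2\zeta$ of the coordinate change is only $O(r^{-\tau-1})$ — the same size as $\partial g$ itself — so it cannot be discarded by size estimates, and its harmlessness is genuinely structural, resting on the contracted Bianchi identity $\nabla_iP^{ijkl}=0$ together with the vanishing of the flux of the divergence of an antisymmetric $2$-tensor through a closed hypersurface. A secondary difficulty is that the Bartnik-type transition lemma and every intermediate error estimate must survive at the low decay rate $\tau>\frac{n-4}3$; as the computation above indicates, this is the exact threshold at which they all do.
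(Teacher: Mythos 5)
Your proof is correct and follows essentially the same route as the paper: the Bartnik two-step scheme built on the identity $\partial_i(P^{ijkl}\partial_l g_{jk})=\tfrac12 L_2+O(r^{-3\tau-4})$, with the divergence-free property $\nabla_iP^{ijkl}=0$ and the antisymmetries of $P$ doing exactly the structural work you identify in killing the gauge term $-P^{ijkl}\partial_j\partial_l\zeta_k$. The only differences are cosmetic: you invoke the tensoriality of $P$ and a Stokes argument for the antisymmetric tensor $T^{ij}=P^{ijkl}\partial_l\zeta_k$, where the paper compares the components $P$ and $\hat P$ directly and splits the flux into tangential and normal parts on $S_r$ by hand.
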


Now it is  natural to ask:

\

{\it Is the  GBC mass $m_2$ nonegative when the Gauss-Bonnet curvature $L_2$ is nonnegative?}

\

Due to the lack of methods, we can not yet attack this question for a general
asymptotically flat manifold. Instead, we leave this question as a
conjecture and provide a strong support in the following
result. Precisely, the problem has an affirmative answer,
if the asymptotically flat manifold $\cM^n$ can be embedded in ${\mathbb R}^{n+1}$ as a graph
over ${\mathbb R}^n$.
\begin{defi}\label{f}
	Let $f:\mathbb R^n \to \mathbb R$ be a smooth function and let $f_i$, $f_{ij}$ and $f_{ijk}$ denote
the first, the second and the third derivatives of $f$ respectively.  $f$ is called {\rm an asymptotically flat function of order
	 $\tau$} if
\begin{align*}
	f_i(x)&= O(|x|^{-\tau/2}), \\
	|x||f_{ij}(x)|+|x|^2|f_{ijk}(x)| &= O(|x|^{-\tau/2})
\end{align*}
at infinity for some $\tau > (n-4)/3$.
\end{defi}

\begin{theo}[Positive Mass Theorem]\label{mainthm2}{
Let $(\cM^n,g)=(\mathbb{R}^n,\delta+df\otimes df)$ be the graph of a smooth asymptotically flat function $f:\mathbb{R}^n\rightarrow \mathbb{R}$ in Definition \ref{f}. Assume  $L_2$ is integrable on $(\cM^n,g)$,
then
$$m_2=\frac {c_2(n)}{2}\int_{\cM^n}\frac{L_2}{\sqrt{1+|\nabla f|^2}}dV_g,$$
where $c_2(n)$ is the normalization constant defined in (\ref{constant}), the gradient and norm in $|\nabla f|$ are all with respect to the Euclidean metric $\delta$.
In particular, $L_2\geq 0$ yields $m_2\geq 0.$
}
\end{theo}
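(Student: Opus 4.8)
The plan is to compute the boundary integral defining $m_2$ directly on the graph and show it equals a bulk integral of $L_2$. First I would record the induced metric and its inverse for a graph: $g_{ij}=\delta_{ij}+f_if_j$, so $g^{ij}=\delta^{ij}-\frac{f^if^j}{1+|\nabla f|^2}$, where indices are raised with $\delta$. The decay hypotheses in Definition \ref{f} are precisely arranged so that $f_if_j=O(|x|^{-\tau})$, and the relevant derivatives of $g_{ij}$ decay like those of $\sigma_{ij}$ of order $\tau>(n-4)/3$; thus Theorem \ref{mainthm} applies and $m_2$ is well defined. The key computational input is the classical fact that for a hypersurface graph the full Riemann tensor is given in terms of the second fundamental form by a Gauss-type formula; writing $u=\sqrt{1+|\nabla f|^2}$ and $h_{ij}=f_{ij}/u$ for the second fundamental form, one has $R_{ijkl}=h_{ik}h_{jl}-h_{il}h_{jk}$, from which $\mathrm{Ric}$, $R$, and hence the tensor $P^{ijkl}$ can all be expressed polynomially in $h$ and $g^{-1}$. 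In particular $P^{ijkl}\partial_l g_{jk}$ becomes an explicit expression in $f$ and its first three derivatives that decays fast enough for the flux integral to converge.

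Next I would show that the integrand $P^{ijkl}\partial_l g_{jk}\,\nu_i$ is, up to the divergence theorem, the restriction to $S_r$ of a compactly-supported-error exact form whose exterior derivative is a multiple of $L_2/u$. Concretely, the strategy is to exhibit a vector field $V^i$ on $\mathbb{R}^n$, built from $f$ and its derivatives, such that $\operatorname{div}_\delta V = \frac{1}{2}\,u^{-1} L_2$ and such that $V^i\nu_i$ agrees with $\frac{1}{c_2(n)}\cdot$(the $m_2$ integrand) on large spheres up to terms that vanish in the limit. Then
\[
m_2=c_2(n)\lim_{r\to\infty}\int_{S_r}V^i\nu_i\,dS
=c_2(n)\int_{\mathbb{R}^n}\operatorname{div}_\delta V\,dx
=\frac{c_2(n)}{2}\int_{\mathbb{R}^n}\frac{L_2}{u}\,dx
=\frac{c_2(n)}{2}\int_{\cM^n}\frac{L_2}{\sqrt{1+|\nabla f|^2}}\,dV_g,
\]
using that $dV_g=u\,dx$. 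The identification of $V$ is where the real work lies: one expects $V^i$ to be essentially $P^{ijkl}f_k f_{jl}/u^{?}$ or a close variant, and checking $\operatorname{div}_\delta V=\tfrac12 u^{-1}L_2$ amounts to a contracted second Bianchi-type identity for the Lovelock tensor $P$ — namely that $P^{ijkl}$ is divergence-free in a suitable sense, which is the analogue for $L_2$ of the fact that the Einstein tensor is divergence-free and underlies the ADM case. I would either invoke this Lovelock divergence identity or verify the needed special case by a direct (if tedious) computation using $R_{ijkl}=h_{ik}h_{jl}-h_{il}h_{jk}$ and the Codazzi equation $\nabla_i h_{jk}=\nabla_j h_{ik}$, which for a graph reduces to the symmetry of $f_{ijk}$.

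Finally, I would attend to the convergence and boundary-term issues that the decay order $\tau>(n-4)/3$ is tailored to handle: the term $L_2$ is quartic in the curvature-order quantities, schematically $\partial^2 f \cdot \partial^2 f$ contributing size $|x|^{-2\tau-2}$ at the quadratic level but with the graph structure the leading piece of $L_2$ is genuinely quartic, of size $|x|^{-2\tau-2}\cdot|x|^{-\tau}$-type terms, so integrability on $\mathbb{R}^n$ and vanishing of the spurious flux contributions both need $3\tau+2>n$, i.e. exactly $\tau>(n-4)/3$. The main obstacle I anticipate is organizing the algebra so that the exact-form identity $\operatorname{div}_\delta V=\tfrac12 u^{-1}L_2$ holds on the nose rather than up to lower-order divergence terms — that is, correctly packaging all the $u$-dependent denominators coming from $g^{ij}$ and $h_{ij}$ — since unlike the linear ADM case the nonlinearity in $u$ does not obviously telescope. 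Once $V$ is found, the positivity statement is immediate: $L_2\ge 0$ and $u>0$ give $m_2\ge 0$.
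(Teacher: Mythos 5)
Your overall strategy --- exhibit the mass integrand as a Euclidean divergence, apply the divergence theorem over $\mathbb{R}^n$, and feed in the divergence-free property of $P$ together with the Gauss-equation form of the curvature of a graph --- is exactly the route the paper takes. However, the pivotal identity, which you yourself flag as ``where the real work lies,'' is left unverified, and as you state it it cannot be the right one. Writing $u=\sqrt{1+|\nabla f|^2}$ as you do: you ask for $\mathrm{div}_{\delta}V=\tfrac12 u^{-1}L_2$ and then convert with $dV_g=u\,dx$, but these two steps combined give $\tfrac{c_2(n)}{2}\int_{\cM^n}L_2\,u^{-2}\,dV_g$, not the asserted $\tfrac{c_2(n)}{2}\int_{\cM^n}L_2\,u^{-1}\,dV_g$; the last equality in your displayed chain is false, since $\int_{\mathbb{R}^n}L_2u^{-1}\,dx=\int_{\cM^n}L_2u^{-2}\,dV_g$. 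The correct statement (the paper's Lemma \ref{divergenceform}) is that the flux vector is literally the mass integrand, $V^i=P^{ijkl}\partial_l g_{jk}$ (which equals $P^{ijkl}f_{jl}f_k$ up to the antisymmetry (\ref{antisymmetry})), and its Euclidean divergence is exactly $\tfrac12 L_2$, with no factor of $u^{-1}$: the $u$-denominators you worry about ``packaging'' never appear in the divergence identity --- they enter only through $L_2$ itself via Lemma \ref{curvature}, and the $1/\sqrt{1+|\nabla f|^2}$ in the theorem comes solely from $dV_g=u\,dV_{\delta}$ at the very end.

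The proof of that identity is the missing core, and it is short once set up: split $\partial_i(P^{ijkl}\partial_l g_{jk})=(\partial_i P^{ijkl})\partial_l g_{jk}+P^{ijkl}\partial_i\partial_l g_{jk}$. In the first term replace $\partial_i P^{ijkl}$ by $\nabla_i P^{ijkl}$ minus Christoffel contractions, use $\nabla_i P^{ijkl}=0$ (Lemma \ref{divergence-free}) and $\Gamma^k_{ij}=f_{ij}f_k/(1+|\nabla f|^2)$ to reduce it to $-\tfrac{|\nabla f|^2}{1+|\nabla f|^2}P^{ijkl}f_{ik}f_{jl}$; in the second term the third derivatives $f_{jli}f_k+f_{kli}f_j$ drop out by the antisymmetry of $P$, leaving $P^{ijkl}f_{ik}f_{jl}$; the sum is $\tfrac{1}{1+|\nabla f|^2}P^{ijkl}f_{ik}f_{jl}=\tfrac12P^{ijkl}R_{ijkl}=\tfrac12 L_2$ by Lemma \ref{curvature}. (Two smaller points: your decay count ``$3\tau+2>n$'' should be $3\tau+4>n$, which is what $\tau>(n-4)/3$ encodes; and once the exact divergence identity is in hand, convergence of the flux follows from the integrability of $L_2$ rather than from pointwise decay alone, so no ``up to vanishing terms'' correction of $V^i\nu_i$ on large spheres is needed.)
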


For  more details see Section $4$ below.
This result is motivated by the recent work of Lam \cite{Lam} mentioned above.
See also the work in \cite{HW}, \cite{dLG1} and \cite{dLG2}.
\begin{rema}
In this paper, for simplicity we focus on the mass defined by the second Gauss-Bonnet curvature.    From the proof one can easily generalize the results to the mass defined by the general Gauss-Bonnet curvature
\begin{eqnarray*}
L_k&=&\frac{1}{2^k}\d^{i_1i_2\cdots i_{2k-1}i_{2k}}
_{j_1j_2\cdots j_{2k-1}j_{2k}}{R_{i_1i_2}}^{j_1j_2}\cdots
{R_{i_{2k-1}i_{2k}}}^{j_{2k-1}j_{2k}}\\
&=&P_{(k)}^{ijlm}R_{ijlm}
\end{eqnarray*}
by (\ref{Lk}) and (\ref{Pk}) below with $k<n/2$. See  the discussion in Section 8.
\end{rema}
More interesting is that we have a Penrose type inequality, at least for  graphs.

\begin{theo}[Penrose Inequality] \label{mainthm3}
Let $\Omega$ be a bounded open set in $\mathbb{R}^n$ and $\Sigma=\partial\Omega.$ If $f:\mathbb{R}^n\setminus\Omega\rightarrow \mathbb{R}$ is a smooth
asymptotically flat function %in Definition \ref{f}
 such that each connected component of $\Sigma$ is in a level set of $f$ and $|\nabla f(x)|\rightarrow\infty$ as
$x\rightarrow\Sigma$. Let $\Sigma_i$ be the connected components of $\Sigma$  ($i=1,\cdots,l)$
%and
%let $\Sigma_i=\partial\Omega_i$
and assume that each $\Sigma_i$ is
convex, then
\begin{eqnarray*}
m_2&\geq&\frac{c_2(n)}{2}\int_{\cM^n}\frac{L_2}{\sqrt{1+|\nabla f|^2}}d V_g
+\sum_{i=1}^l\frac{1}{4}\bigg(\frac{\int_{\Sigma_i} R}{(n-1)(n-2)\omega_{n-1}}\bigg)^{\frac{n-4}{n-3}}\\
&\geq&\frac{c_2(n)}{2}\int_{\cM^n}\frac{L_2}{\sqrt{1+|\nabla f|^2}}d V_g
+\sum_{i=1}^l\frac{1}{4}\bigg(\frac{\int_{\Sigma_i} H}{(n-1)\omega_{n-1}}\bigg)^{\frac{n-4}{n-2}}\\
&\geq&\frac{c_2(n)}{2}\int_{\cM^n}\frac{L_2}{\sqrt{1+|\nabla f|^2}}d V_g
+\sum_{i=1}^l\frac{1}{4}\bigg(\frac{|\Sigma_i|}{\omega_{n-1}}\bigg)^{\frac{n-4}{n-1}}\\
&\geq&\frac{c_2(n)}{2}\int_{\cM^n}\frac{L_2}{\sqrt{1+|\nabla f|^2}}d V_g
+\frac{1}{4}\bigg(\frac{|\Sigma|}{\omega_{n-1}}\bigg)^{\frac{n-4}{n-1}}.
\end{eqnarray*}
In particular,
$L_2\geq 0$ yields
$$m_2\geq \frac{1}{4}\bigg(\frac{\int_\Sigma R}{(n-1)(n-2)\omega_{n-1}}\bigg)^{\frac{n-4}{n-3}}\geq \frac{1}{4}\bigg(\frac{|\Sigma|}{\omega_{n-1}}\bigg)^{\frac{n-4}{n-1}}.$$ Moreover,  equalities are achieved by metric (\ref{metric}).
\end{theo}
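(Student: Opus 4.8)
The plan is to follow the approach of Lam \cite{Lam} that underlies Theorem \ref{mainthm2}, adding a careful treatment of the inner boundary $\Sigma=\partial\Omega$. The starting point, to be extracted from the proof of Theorem \ref{mainthm2}, is a divergence identity: on the domain of $f$ there is a vector field $V^i$, polynomial in $f,\nabla f,\nabla^2 f$ and agreeing to leading order at infinity with $P^{ijkl}\partial_l g_{jk}$, such that in Euclidean coordinates on $\R^n$
\[
\partial_i V^i\, dx \;=\; \frac12\,\frac{L_2}{\sqrt{1+|\nabla f|^2}}\, dV_g ,
\qquad
c_2(n)\lim_{R\to\infty}\int_{S_R} V^i\nu_i\, dS \;=\; m_2 .
\]
The first relation is the engine of Theorem \ref{mainthm2}, and its normalization is fixed by requiring that it reproduce $m_2=\frac{c_2(n)}{2}\int_{\cM^n}\frac{L_2}{\sqrt{1+|\nabla f|^2}}\,dV_g$ when $\Omega=\emptyset$. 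Applying the divergence theorem on $B_R\setminus\Omega$ and letting $R\to\infty$ — the interior integral converges because $L_2$ is integrable, the outer flux by the decay of $f$ in Definition \ref{f} — I would obtain
\[
m_2 \;=\; \frac{c_2(n)}{2}\int_{\cM^n}\frac{L_2}{\sqrt{1+|\nabla f|^2}}\, dV_g \;+\; \sum_{i=1}^{l}\mathcal B_i ,
\]
where $\mathcal B_i$ is the limiting flux of $V$ into the component $\Sigma_i$ of $\Sigma$, with $\nu$ the Euclidean unit normal pointing into $\Omega$.

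The boundary term $\mathcal B_i$ is the heart of the matter, and here the two hypotheses on $\Sigma$ enter. Since $\Sigma_i$ lies in a level set of $f$, the vector $\nabla f$ is Euclidean-normal to $\Sigma_i$ and $\nabla^2 f|_{T\Sigma_i}$ equals $|\nabla f|$ times the Euclidean second fundamental form of $\Sigma_i$; and since $|\nabla f|\to\infty$ on $\Sigma_i$, I would expand $V^i\nu_i$ on the nearly vertical part of the graph, check that all negative powers of $1+|\nabla f|^2$ cancel, and be left with an expression depending only on the Euclidean principal curvatures $\kappa=(\kappa_1,\dots,\kappa_{n-1})$ of $\Sigma_i$. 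Dimensional analysis forces this to be homogeneous of degree three in $\kappa$, and I expect the (lengthy) computation — the $L_2$-analogue of Lam's boundary computation for the scalar curvature — to give
\[
\mathcal B_i \;=\; \frac{1}{4\,\binom{n-1}{3}\,\omega_{n-1}}\int_{\Sigma_i}\sigma_3(\kappa)\, dA ,
\]
$\sigma_3$ being the third elementary symmetric polynomial, which is $\ge 0$ since $\Sigma_i$ is convex.

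The chain of inequalities then follows from classical convex geometry. The Gauss equation for $\Sigma_i\hookrightarrow\R^n$ gives $R=2\sigma_2(\kappa)$ (with $R$ the scalar curvature of $\Sigma_i$), hence $\frac{\int_{\Sigma_i}\sigma_2}{\binom{n-1}{2}\omega_{n-1}}=\frac{\int_{\Sigma_i}R}{(n-1)(n-2)\omega_{n-1}}$, while $\int_{\Sigma_i}\sigma_1=\int_{\Sigma_i}H$ and $\int_{\Sigma_i}\sigma_0=|\Sigma_i|$. As each $\Sigma_i$ is convex, the classical Alexandrov--Fenchel and Minkowski inequalities give
\[
\frac{\int_{\Sigma_i}\sigma_3}{\binom{n-1}{3}\omega_{n-1}} \;\ge\; \Big(\frac{\int_{\Sigma_i}\sigma_2}{\binom{n-1}{2}\omega_{n-1}}\Big)^{\frac{n-4}{n-3}} \;\ge\; \Big(\frac{\int_{\Sigma_i}\sigma_1}{(n-1)\omega_{n-1}}\Big)^{\frac{n-4}{n-2}} \;\ge\; \Big(\frac{|\Sigma_i|}{\omega_{n-1}}\Big)^{\frac{n-4}{n-1}} ,
\]
each step combining the relevant quermassintegral inequality with monotonicity of $t\mapsto t^p$ for $p\in(0,1)$ (legitimate since $n\ge5$). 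Inserting these into the formula for $\mathcal B_i$ yields the first three displayed inequalities of the theorem, and the last one is the subadditivity of $t\mapsto t^{(n-4)/(n-1)}$, which gives $\sum_i(|\Sigma_i|/\omega_{n-1})^{(n-4)/(n-1)}\ge(|\Sigma|/\omega_{n-1})^{(n-4)/(n-1)}$. For the equality statement, realize (\ref{metric}) as the graph over $\R^n\setminus B_{\rho_0}$, $\rho_0=(2m)^{2/(n-4)}$, with $|\nabla f|\to\infty$ on the round sphere $\Sigma=\{|x|=\rho_0\}$: all the Alexandrov--Fenchel inequalities are then equalities, and since $\int_\Sigma R=(n-1)(n-2)\omega_{n-1}\rho_0^{n-3}$ one gets $\mathcal B_i=\frac14\rho_0^{n-4}=\frac14\left(\frac{\int_\Sigma R}{(n-1)(n-2)\omega_{n-1}}\right)^{\frac{n-4}{n-3}}=m^2=m_2$ (the last equality from Section~6), so the bulk term vanishes (indeed $L_2\equiv0$ for (\ref{metric})) and every inequality above is an equality.

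The main obstacle is the boundary analysis of the last two paragraphs, together with reconstructing the divergence identity for $L_2$. Because $L_2$ and $P$ are quadratic in curvature, both the vector field $V$ and the identity $\partial_i V^i\, dx=\frac12\frac{L_2}{\sqrt{1+|\nabla f|^2}}\,dV_g$ are substantially heavier than Lam's scalar-curvature version; and the decisive technical point — that the inner flux $\mathcal B_i$ has a finite limit as $|\nabla f|\to\infty$ and equals precisely $\frac{1}{4\binom{n-1}{3}\omega_{n-1}}\int_{\Sigma_i}\sigma_3(\kappa)$, with a constant matching the Alexandrov--Fenchel normalization — will require a delicate expansion of $V$ near the vertical part of the graph and a check that all divergent contributions cancel. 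Granting that, the remaining steps are standard.
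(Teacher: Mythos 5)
Your proposal is correct and follows essentially the same route as the paper: integrate the divergence identity of Lemma \ref{divergenceform} over $\mathbb{R}^n\setminus\Omega$, identify the inner flux as a curvature integral over $\Sigma$, and then apply the Aleksandrov--Fenchel chain together with superadditivity of $t\mapsto t^{(n-4)/(n-1)}$. The boundary computation you defer is exactly the content of Proposition \ref{penrose}, and your predicted answer $\frac{1}{4\binom{n-1}{3}\omega_{n-1}}\int_{\Sigma_i}\sigma_3(\kappa)$ agrees with the paper's $c_2(n)\int_{\Sigma_i}3H_3\,dS$, including the constant and the role of the hypotheses that $\Sigma_i$ lies in a level set and $|\nabla f|\to\infty$ (which sends the factor $\bigl(|\nabla f|^2/(1+|\nabla f|^2)\bigr)^2$ to $1$).
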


The Penrose inequalities are optimal since equalities in the Penrose inequalities
are achieved  by  metric (\ref{metric}). Moreover, metric (\ref{metric}) can be realized as an induced metric of  a graph. See
Remark \ref{Schgraph} below.

This work, together with the work of Lam \cite{Lam}, gives a simple, but interesting application on the
positive mass theorem and the Penrose inequality for the ADM mass in Section 7 below.

Our results  open many interesting questions and establish a natural relationship between
many interesting functionals of intrinsic curvatures and extrinsic curvatures, which we will discuss at the end of this paper.

\

The rest of the paper is organized as follows. In Section 2 we recall the definitions of the Gauss-Bonnet curvature and the Lovelock curvature.
The new mass is defined  and proved  being a geometric invariant in Section 3. In Section 4 we show that the new mass is nonnegative
if the Gauss-Bonnet curvature is nonnegative for graphs.
The corresponding Penrose type inequality will be proved in Section 5. In Section 6, we will discuss metric (\ref{metric}), which is
 an important example and compute its Gauss-Bonnet-Chern mass explicitly. Applications to the ADM mass are given in Section 7 and  further generalizations,  problems and conjectures
are discussed  in Section 8.

\section{Lovelock curvatures}\label{sec5}

In this Section, let us  recall the work of Lovelock \cite{Lo} on generalized Einstein tensors.
Let
\[
 E=Ric -\frac 12 R g,
\]
be the Einstein tensor.
The Einstein tensor is very important in  physics, and certainly also in mathematics. It admits an important property,
namely it is a conserved quantity, i.e.,
\[\nabla_j E^{ij}=0,\]
where $\nabla$ is the covariant derivative with respect to the metric $g$.
And throughout the paper, we use the summation convention.
In \cite{Lo} Lovelock studied the classification of tensors $A$ satisfying
\begin{itemize}
 \item [(i)]$A^{ij}=A^{ji}$, i.e., $A$ is symmetric;
\item[(ii)] $A^{ij}=A^{ij}(g_{AB},g_{AB,C}, g_{AB,CD})$;
\item[(iii)] $\nabla_j A^{ij}=0$, i.e. $A$ is divergence-free;
\item [(iv)] $A^{ij}$ is linear in the second derivatives of
$g_{AB}$.
\end{itemize}
It is clear that the Einstein tensor satisfies all the conditions. In fact,  the Einstein tensor is
the unique tensor satisfying all four conditions, up to a  constant multiple.
Lovelock classified all the 2-tensors satisfying (i)--(iii).
 He proved that any 2-tensor satisfying (i)--(iii) has the following form
\[\sum_{j}\a_j E^{(j)},\]
with certain constants $\a_j$, $j\ge 0$, where
the 2-tensor $E^{(k)}$ is defined by
\[E^{(k)}_{ij}:=-\frac{1}{2^{k+1}}g_{l i}\d^{l  i_1i_2\cdots i_{2k-1}i_{2k}}
_{jj_1j_2\cdots j_{2k-1}i_{2k}}{R_{i_1i_2}}^{j_1j_2}\cdots
{R_{i_{2k-1}i_{2k}}}^{j_{2k-1}j_{2k}}.\]
Here
 the generalized Kronecker delta is defined by
\[
 \d^{j_1j_2 \dots j_r}_{i_1i_2 \dots i_r}=\det\left(
\begin{array}{cccc}
\d^{j_1}_{i_1} & \d^{j_2}_{i_1} &\cdots &  \d^{j_r}_{i_1}\\
\d^{j_1}_{i_2} & \d^{j_2}_{i_2} &\cdots &  \d^{j_r}_{i_2}\\
\vdots & \vdots & \vdots & \vdots \\
\d^{j_1}_{i_r} & \d^{j_2}_{i_r} &\cdots &  \d^{j_r}_{i_r}
\end{array}
\right).
\]
As a convention we set $E^{(0)}=1$. It is clear to see that $E^{(1)}$  is the Einstein tensor.
The tensor $E^{(k)}_{ij}$ is a very natural generalization of the Einstein tensor. We call $E^{(k)}$ the {\it $k$-th Lovelock curvature} and its
 trace up to a constant multiple
 \begin{equation}\label{Lk}
L_k:=\frac{1}{2^k}\d^{i_1i_2\cdots i_{2k-1}i_{2k}}
_{j_1j_2\cdots j_{2k-1}j_{2k}}{R_{i_1i_2}}^{j_1j_2}\cdots
{R_{i_{2k-1}i_{2k}}}^{j_{2k-1}j_{2k}},
\end{equation}
the {\it $k$-th Gauss-Bonnet curvature}, or simply the {\it Gauss-Bonnet curvature}. Both have been intensively studied in the Gauss-Bonnet gravity,
which is a generalization of the Einstein gravity. One could check  that $L_k=0$ if $2k>n$.
When $2k=n$, $L_k$ is in fact the Euler density, which was studied by Chern \cite{Chern1, Chern2} in his proof of the Gauss-Bonnet-Chern theorem. See also a nice survey \cite{Zhang}
on the  proof of the Gauss-Bonnet-Chern theorem.
 For
$k<n/2$, $L_k$ is therefore called the dimensional continued Euler density
in physics.
The above curvatures have been studied by many mathematicians and physicists, see for instance Pattersen \cite{Pat}
 and Labbi \cite{Labbi}.

In this paper we focus on the case $k=2$. The results can be generalized to  $k<n/2$. For the discussion see Section 8. One can also check that
\[
  E^{(2)}_{ij}=2RR_{ij}-4R_{is} R^s_{j} -4 R_{sl}
{{{R^s}_i}^l}_j+2R_{islt }{R_j}^{slt}-\frac12 g_{ij}L_2,
\]
and
\[
L_2=\frac 14 \delta
^{i_1i_2i_3i_4}_{j_1j_2j_3j_4} {R_{i_1i_2}}^{j_1j_2}{R_{i_3i_4}}^{j_3j_4}
=R_{ijsl}R^{ijsl}-4R_{ij}R^{ij}+R^2.
\]
$L_2$ is called the Gauss-Bonnet term in physics.
A direct computation gives the relation of $L_2$ with the $\sigma_2$-scalar curvature and the Weyl tensor as follows:
\begin{equation}\label{s1}\begin{array}{rcl}
 L_2 &=& \ds\vs \|W\|^2-4\frac{n-3}{n-2}\|Ric\|^2+\frac {n(n-3)}{(n-1)(n-2)} R^2\\
&=&\ds \vs\|W\|^2+\frac {n-3}{n-2}\bigg(\frac{n}{n-1}R^2-4\|Ric\|^2\bigg)\\
&=&\ds \|W\|^2+8(n-2)(n-3)\s_2.
\end{array}
\end{equation}
Here the $\sigma_k$-scalar curvature $\s_2$ has been intensively studied in the $\sigma_k$-Yamabe problem, which is first
studied by Viaclovsky \cite{Via2} and Chang-Gursky-Yang \cite{CGY}. For the study of the $\sigma_k$-Yamabe problem, see  for example the surveys
of Guan \cite{Guan} and Viaclovsky \cite{Via1}.

As a generalization of the Einstein metric, the solution of the following equation
is called {\it (string-inspired) Einstein-Gauss-Bonnet} metric
\[
 E^{(2)}_{ij}=\lambda g_{ij}.
\]
$E^{(2)}$ was already given by Lanczos \cite{Lan} in 1938 and is called the Lanczos tensor.
If $g$ is such a metric, it is obvious that
\[
 \lambda= \frac 1 n g^{ij} E^{(2)}_{ij}=\frac{4-n}{2n} L_2=\frac{4-n}{2n}\bigg( 8(n-2)(n-3)\s_2(g)+\|W\|^2\bigg).
\]
Since $E^{(2)}$ is  divergence-free,
$\lambda$ must be a constant in this case. This is a Schur type result. An almost Schur lemma for $E^{(k)}$
was proved in \cite{GWX}, which generalizes
a result of Andrews, De Lellis-Topping \cite{DT}.

\section{The Gauss-Bonnet-Chern  Mass}
In this section, we will introduce a new mass by using the Gauss-Bonnet curvature for asymptotically flat manifolds. This would be compared with the ADM mass which can be defined from the scalar curvature as indicated in the introduction. Moreover, following the approach from \cite{Bar} (see also \cite{LP}), we are able to show that this new  mass is a geometric invariant, i.e. it does not depend on the choice of  coordinates at infinity.

Recall that the Gauss-Bonnet curvature is defined by
$$L_2=R_{ijkl}R^{ijkl}-4R_{ij}R^{ij}+R^2.$$
One  crucial key to define a new mass is the observation that the Gauss-Bonnet curvature has the following decomposition
$$L_2=R_{ijkl}P^{ijkl},$$
where
\begin{equation}\label{P}
P^{ijkl}=R^{ijkl}+R^{jk}g^{il}-R^{jl}g^{ik}-R^{ik}g^{jl}+R^{il}g^{jk}+\frac12R(g^{ik}g^{jl}-g^{il}g^{jk}).
\end{equation}
This decomposition of $L_2$ will play a crucial role in the following discussion.
It is very easy to see that this $(0,4)$ tensor $P$ has the same symmetric property as the Riemannian curvature tensor, namely,
\begin{equation}\label{antisymmetry}
P^{ijkl}=-P^{jikl}=-P^{ijlk}=P^{klij}.
\end{equation}
Also one can easily check that $P$ satisfies the first Bianchi identity.
Another key ingredient is  that $P$ is divergence-free.  Before we discuss further, let us clarify the convention for the Riemannian curvature first:
$$
R_{ijkl}=R_{ijl}^m g_{mk},\quad
R_{ik}=g^{jl}R_{ijkl}=R^j_{jik}.
$$
\begin{lemm}\label{divergence-free}
$$
\nabla_iP^{ijkl}=\nabla_jP^{ijkl}=\nabla_kP^{ijkl}=\nabla_lP^{ijkl}=0.
$$
\end{lemm}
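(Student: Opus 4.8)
The plan is to express $P^{ijkl}$ in terms of the generalized Kronecker delta and to reduce the divergence-free property to the second Bianchi identity for $Rm$ together with the contracted Bianchi identity. Because of the symmetries \eqref{antisymmetry} and the first Bianchi identity, it suffices to prove $\nabla_l P^{ijkl}=0$; the other three identities follow by relabelling indices and using \eqref{antisymmetry}. First I would observe that, up to the normalization in \eqref{Lk}, the tensor $P$ is exactly the Lovelock $(0,4)$-tensor coming from $E^{(2)}$, i.e. (in mixed form) $P_{ij}{}^{kl} = \tfrac14\,\delta^{kl\,pq}_{ij\,rs}\,R_{pq}{}^{rs}$, so that tracing the last pair of indices returns $L_2$. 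This is the same antisymmetrized combination that appears in $E^{(2)}_{ij}$, and one checks directly that the six terms in \eqref{P} match the expansion of this Kronecker-delta contraction after lowering indices.

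The key step is then the following: the generalized Kronecker delta is a constant tensor (its components are $0,\pm1$ regardless of the metric), so $\nabla$ passes through it, and
$$
\nabla_l P_{ij}{}^{kl} \;=\; \tfrac14\,\delta^{kl\,pq}_{ij\,rs}\,\nabla_l R_{pq}{}^{rs}.
$$
The right-hand side is an antisymmetrization of $\nabla_l R_{pq}{}^{rs}$ over the lower triple $(l,p,q)$ — more precisely, because the $\delta$ is totally antisymmetric in its lower indices, the expression antisymmetrizes the three indices $l,p,q$ of $\nabla_l R_{pq}{}^{rs}$. By the second Bianchi identity $\nabla_{[l} R_{pq]}{}^{rs}=0$, this antisymmetrization vanishes identically. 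Hence $\nabla_l P_{ij}{}^{kl}=0$, and raising the index $l$ with the (parallel) metric gives $\nabla_l P^{ijkl}=0$.

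I expect the only real bookkeeping obstacle to be the first step: verifying that the explicit six-term formula \eqref{P} for $P$ really is $\tfrac14\,\delta^{kl\,pq}_{ij\,rs}R_{pq}{}^{rs}$ (with the correct sign conventions for $R_{ijkl}$ stated just before the lemma). This is a routine but slightly tedious expansion of a $4\times 4$ determinant of Kronecker deltas, producing the curvature term, the four Ricci terms with the indicated signs, and the scalar-curvature term. Alternatively — and this is the route I would actually write out if the determinant expansion is unpleasant — one can avoid the Kronecker-delta identification entirely and differentiate \eqref{P} term by term: $\nabla_l R^{ijkl}$ contributes $\nabla^k R^{ij}-\nabla^j R^{ik}$ by the contracted second Bianchi identity, the four Ricci-times-metric terms contribute combinations of $\nabla R^{ij}$ and $\nabla_l R^{il}$, and the scalar term contributes $\nabla R$; invoking $\nabla_l R^{il}=\tfrac12\nabla^i R$ (the contracted Bianchi identity) one sees all terms cancel in pairs. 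Either way the mechanism is the same — the second Bianchi identity and its contraction — and the verification of the first Bianchi identity and the symmetries \eqref{antisymmetry} for $P$, already asserted in the text, is immediate from \eqref{P}.
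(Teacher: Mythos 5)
Your proposal is correct, and your fallback route is in fact the paper's own proof: the authors differentiate \eqref{P} term by term, convert $\nabla_i R^{ijkl}$ into Ricci derivatives by the once-contracted second Bianchi identity, and cancel the remaining terms using $\nabla_j R^{ij}=\tfrac12\nabla^i R$, obtaining $\nabla_iP^{ijkl}=0$ and deducing the other three divergences from \eqref{antisymmetry}, exactly as you describe. Your primary route is genuinely different in packaging: you identify $P$ with the generalized Kronecker-delta contraction (this is the tensor $P_{(2)}$ of \eqref{Pk}, and the paper itself asserts $P_{(2)}=P$ only later, in Section 8, without the expansion), note that the delta has constant components so $\nabla$ passes through it, and observe that the contraction antisymmetrizes $\nabla_l R_{pq}{}^{rs}$ over three indices, so the full (uncontracted) second Bianchi identity kills it. What this buys is immediate generality: the same one-line argument gives $\nabla P_{(k)}=0$ for every $k$, which is precisely what the generalizations sketched in Section 8 require, whereas the paper's explicit cancellation is shorter to write for $k=2$ but does not scale without the delta formalism; the price is the (routine) verification that the determinant expansion reproduces the six terms of \eqref{P} with the stated curvature conventions. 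One small correction to an aside: the full metric trace of $P$ is $\tfrac{(n-2)(n-3)}{2}R$, not $L_2$; what recovers $L_2$ is the contraction $P^{ijkl}R_{ijkl}$, and this plays no role in the divergence argument, so the proof itself is unaffected.
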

\begin{proof}
This lemma follows directly from the differential Bianchi identity.
\begin{eqnarray*}
\nabla_iP^{ijkl}&=&-\nabla^k R^{ijl}_{\quad i}-\nabla^l {{R^{ij}}_i}^k+\nabla^l R^{jk}-\nabla^k R^{jl}\\
&&-\frac12 \nabla^kR g^{jl}+\frac12\nabla^lR g^{jk}+\frac12 \nabla_i R(g^{ik}g^{jl}-g^{il}g^{jk})\\
&=&\nabla^k R^{jl}-\nabla^l R^{jk}+\nabla^l R^{jk}-\nabla^k R^{jl}-\frac12 \nabla^kR g^{jl}+\frac12\nabla^lR g^{jk}\\
&&+\frac12 \nabla^kR g^{jl}-\frac12\nabla^lR g^{jk}\\
&=&0.
\end{eqnarray*}
The rest follows  from the symmetry property (\ref{antisymmetry}) of $P$.
\end{proof}

This divergence-free property of $P$ was observed already in physics literature, see for instance \cite{Davis}.
In view of Lemma \ref{divergence-free}, we are able to
derive the corresponding expression of the mass-energy in the Einstein Gauss-Bonnet gravity.
We observe that for the asymptotically flat manifolds, the Gauss-Bonnet
curvature can be expressed as a divergence term besides some terms with
faster decay. First, in the local coordinates, the curvature tensor is expressed as
$$R_{ijk}^{m}=\partial_i \Gamma^m_{jk}-\partial_j \Gamma^m_{ik}+\Gamma^m_{is}\Gamma^s_{jk}-\Gamma^m_{js}\Gamma^s_{ik}.$$
 From  the divergence-free property of $P$
%we have
%\begin{eqnarray}\label{divergenceL2_a}
%\partial_i P^{ijkl}=\nabla_i P^{ijkl}+O(r^{-3-2\tau})
%\end{eqnarray}
and the fact that the
quadratic terms of Christoffel symbols have faster decay, we compute
\begin{eqnarray}\label{divergenceL2}
L_2&=&R_{ijkl}P^{ijkl}=g_{km}R^m_{ijl}P^{ijkl}\nonumber\\
&=&g_{km}(\partial_i \Gamma^m_{jl}-\partial_j \Gamma^m_{il})P^{ijkl}+O(r^{-4-3\tau})\nonumber\\
&=&g_{km}\bigg[\nabla_i(\Gamma^m_{jl}P^{ijkl})-\nabla_j(\Gamma^m_{il}P^{ijkl})\bigg]+O(r^{-4-3\tau})\nonumber\\
&=&\frac 12\nabla_i\bigg[(g_{jk,l}+g_{kl,j}-g_{jl,k})P^{ijkl}\bigg]-\frac 12\nabla_j\bigg[(g_{ik,l}+g_{kl,i}-g_{il,k})P^{ijkl}\bigg]+O(r^{-4-3\tau})\nonumber\\
&=&2\nabla_i\bigg(g_{jk,l}P^{ijkl}\bigg)+O(r^{-4-3\tau})\nonumber\\
&=&2\partial_i\bigg(g_{jk,l}P^{ijkl}\bigg)+O(r^{-4-3\tau}),
\end{eqnarray}
where the fifth equality follows from (\ref{antisymmetry}).\\

With this divergence expression \eqref{divergenceL2} of $L_2$, one can check that the limit
$$\lim_{r\rightarrow\infty}\int_{S_r}P^{ijkl}\partial_l g_{jk}\nu_{i}dS,$$
exists and is finite provided that $L_2$ is integrable and the decay order $\tau>\frac{n-4}{3}$, and hence we have:
\begin{theo}\label{thm1}
Suppose that $(\cM^n,g)(n\geq5)$ is an asymptotically flat manifold of decay order $\tau>\frac{n-4}{3}$ and the  Gauss-Bonnet curvature $L_2$ is integrable on $(\cM^n,g)$, then the mass $m_2(g)$ defined  in Definition \ref{m2} is well-defined.
\end{theo}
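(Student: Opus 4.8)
The plan is to turn the divergence identity \eqref{divergenceL2} into a convergence statement for the flux integral by a Cauchy argument on large annuli, using the integrability of $L_2$ and the sharp decay threshold. Set
\[
V^i := 2\,P^{ijkl}\partial_l g_{jk},
\]
so that, once the relevant limit is shown to exist, $m_2(g)=\frac{c_2(n)}{2}\lim_{r\to\infty}\int_{S_r}V^i\nu_i\,dS$. The content of \eqref{divergenceL2} is precisely that, in the standard coordinates on $\cM\setminus K$,
\[
\partial_i V^i = L_2 + O(r^{-4-3\tau}).
\]

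First I would record the elementary decay bounds. The curvature tensor of $g$ involves second derivatives of the metric, so $R_{ijkl}=O(r^{-2-\tau})$ and hence $P^{ijkl}=O(r^{-2-\tau})$; together with $\partial_l g_{jk}=O(r^{-1-\tau})$ this gives $V^i = O(r^{-3-2\tau})$, so $\int_{S_r}|V^i|\,dS = O(r^{\,n-4-2\tau})$. It is worth stressing that this quantity need \emph{not} tend to $0$ when $\tau$ is only slightly larger than $(n-4)/3$ (which is strictly smaller than $(n-4)/2$), so one cannot read off convergence of the flux from a crude estimate on a single sphere; the argument must genuinely exploit the divergence structure of $P$ coming from Lemma \ref{divergence-free}.

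Next I would apply the Euclidean divergence theorem on the annulus $A_{\rho_1,\rho_2}:=\{\rho_1\le |x|\le \rho_2\}\subset \mathbb R^n\setminus B_R(0)$, with $\nu$ the outward Euclidean unit normal on the coordinate spheres:
\[
\int_{S_{\rho_2}}V^i\nu_i\,dS-\int_{S_{\rho_1}}V^i\nu_i\,dS=\int_{A_{\rho_1,\rho_2}}\partial_i V^i\,dx=\int_{A_{\rho_1,\rho_2}}L_2\,dx+\int_{A_{\rho_1,\rho_2}}O(r^{-4-3\tau})\,dx.
\]
Since $dV_g=(1+O(r^{-\tau}))\,dx$, integrability of $L_2$ with respect to $dV_g$ forces $\int_{\mathbb R^n\setminus B_R}|L_2|\,dx<\infty$; and $\int_{\mathbb R^n\setminus B_R}r^{-4-3\tau}\,dx\sim\int_R^\infty r^{\,n-5-3\tau}\,dr<\infty$ exactly because $n-5-3\tau<-1\iff\tau>(n-4)/3$, which is the hypothesis. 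Hence both terms on the right are tails of absolutely convergent integrals.

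Therefore the right-hand side tends to $0$ as $\rho_1,\rho_2\to\infty$, so the function $\rho\mapsto\int_{S_\rho}V^i\nu_i\,dS$ satisfies the Cauchy criterion at infinity and converges to a finite limit; multiplying by $c_2(n)/2$ shows $m_2(g)$ is well-defined and finite. The main (and only) subtlety, as flagged above, is that the flux over a single sphere is not controlled to decay — only to be $O(r^{\,n-4-2\tau})$ — so the existence of the limit rests essentially on the \emph{exact} divergence-free property of $P$, the integrability of $L_2$, and the sharp exponent $\tau>(n-4)/3$; all three ingredients are used, and relaxing the decay threshold would make the error integral diverge.
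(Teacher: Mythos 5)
Your proposal is correct and follows essentially the same route as the paper: the paper derives the divergence expression \eqref{divergenceL2} and then obtains well-definedness precisely by the annulus/divergence-theorem Cauchy argument you spell out, with the error term $O(r^{-4-3\tau})$ integrable exactly when $\tau>\frac{n-4}{3}$ and $\int|L_2|\,dx$ finite because $dV_g$ is comparable to $dx$. Your observation that the single-sphere flux is only $O(r^{n-4-2\tau})$, so the limit genuinely rests on the divergence-free structure of $P$ rather than pointwise decay, is a correct elaboration of the step the paper leaves as ``one can check''.
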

We call $m_2$ the Gauss-Bonnet-Chern mass, or just the GBC mass.
The definition of the Gauss-Bonnet-Chern mass involves the choice of  coordinates at infinity. So it is natural to ask if it is a geometric invariant which does not depend on the choice of coordinates as the ADM mass. We have an affirmative answer.

\begin{theo}
If $(\cM^n,g)$ is asymptotically flat of order $\tau>\frac{n-4}{3}$ and $L_2$ is integrable, then $m_2(g)$ depends only on the metric $g$.
\end{theo}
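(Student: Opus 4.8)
The plan is to adapt Bartnik's argument \cite{Bar} (see also \cite{LP}) for the coordinate invariance of the ADM mass, replacing the scalar-curvature estimate and the hypothesis $R\in L^1$ by the divergence identity \eqref{divergenceL2} and the integrability of $L_2$. Let $x$ and $\bar x$ be two asymptotically flat charts on neighbourhoods of infinity in which $g$ has decay order $\tau>\frac{n-4}{3}$, let $\Phi=\bar x\circ x^{-1}$ be the transition diffeomorphism, defined on $\{|x|\ge R\}$, so that $\Phi^*\bar g=g$, where $\bar g$ denotes the components of $g$ in the $\bar x$-chart, and let $m_2^{(x)}$, $m_2^{(\bar x)}$ be the corresponding values of \eqref{mass}; we must show $m_2^{(x)}=m_2^{(\bar x)}$. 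The first step is to observe that the flux $\Sigma\mapsto\int_\Sigma P^{ijkl}\partial_l g_{jk}\,\nu_i\,dS$ is, in the limit, independent of the exhausting family: if $\Sigma_r$ and $\Sigma'_r$ are closed hypersurfaces enclosing $\{|x|\le R\}$ and lying in $\{cr\le|x|\le Cr\}$, then by \eqref{divergenceL2} and the divergence theorem their fluxes differ by $\int(\tfrac12 L_2+O(r^{-4-3\tau}))$ over the region between them, which tends to $0$ as $r\to\infty$ since both $L_2$ and $O(r^{-4-3\tau})$ are integrable over the end (the latter because $4+3\tau>n$). Consequently $m_2^{(\bar x)}$ may be computed over the surfaces $\Phi(S_r)$, and everything pulled back by $\Phi$ to the $x$-chart.

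Next I would record the asymptotics of $\Phi$, following \cite{Bar}. From $\Phi^*\bar g=g$ and $g,\bar g=\delta+O(r^{-\tau})$ one gets $(D\Phi)^{\top}(D\Phi)=I+O(r^{-\tau})$, so $D\Phi$ approaches $O(n)$; composing $\bar x$ with a fixed Euclidean rigid motion — which leaves $m_2^{(\bar x)}$ unchanged, the integrand of \eqref{mass} being built from the tensor $P$ and Euclidean objects — we may assume $\Phi\to\mathrm{id}$, and then weighted elliptic estimates for the nearly $g$-harmonic functions $\Phi^k$ give $\Phi^k(x)=x^k+O(r^{1-\tau+\varepsilon})$, $\partial(\Phi-\mathrm{id})=O(r^{-\tau+\varepsilon})$ and $\partial^2\Phi=O(r^{-1-\tau+\varepsilon})$ for every $\varepsilon>0$; only positivity of $\tau$ is used here. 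The structural point is that $P$ in \eqref{P} is a genuine curvature tensor field, so it transforms tensorially; the only non-tensorial ingredient of \eqref{mass} is $\partial_l g_{jk}$, whose transformation law produces, besides the tensorial piece, two terms linear in $\partial^2\Phi$. One of them, of schematic form $P^{ijkl}\,\partial_j\Phi^a\,\partial_k\partial_l\Phi^b\,\bar g_{ab}$, vanishes identically because $\partial_k\partial_l\Phi^b$ is symmetric in $k,l$ while $P^{ijkl}$ is antisymmetric in $k,l$ by \eqref{antisymmetry}. Adding in the corrections produced by the Jacobian factors in $\nu_i\,dS$ and in the index contractions — each of size $O(r^{-\tau+\varepsilon})$, hence contributing $O(r^{-\tau+\varepsilon})\cdot O(r^{-3-2\tau})\cdot r^{n-1}=O(r^{n-4-3\tau+\varepsilon})\to 0$ — the problem reduces to showing that the single remaining discrepancy term has vanishing flux in the limit.

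That term is $\lim_{r\to\infty}\int_{S_r}P^{ijkl}\,(\partial_j\partial_l\Phi^a)\,\partial_k\Phi^b\,\bar g_{ab}\,\nu_i\,dS$, and this is where I expect the main difficulty: with the weak decay $\tau>\frac{n-4}{3}$ a crude estimate gives only $|P|\,|\partial^2\Phi|\,r^{n-1}=O(r^{n-4-2\tau+\varepsilon})$, which would demand the stronger threshold $\tau>\frac{n-4}{2}$. One must instead use structure, exactly as in the proof that $m_2$ is well-defined: writing $\partial_j\partial_l\Phi^a=\partial_j V^a_l$ with $V^a_l:=\partial_l\Phi^a-\delta^a_l=O(r^{-\tau+\varepsilon})$, one moves the derivative $\partial_j$ off $V^a_l$ using the antisymmetry $P^{ijkl}=-P^{jikl}$ together with $\nabla_j P^{ijkl}=0$ from Lemma \ref{divergence-free}; this produces (i) $\partial_j$ of a quantity antisymmetric in $i\leftrightarrow j$, whose flux over the closed surface $S_r$ vanishes for every $r$, and (ii) two remainder terms carrying the factor $\partial_j P^{ijkl}=O(r^{-3-2\tau})$ and the factor $\partial_j(\partial_k\Phi^b\,\bar g_{ab})=O(r^{-1-\tau+\varepsilon})$ respectively in place of $\partial_j\partial_l\Phi^a$, so that each remainder integrand is $O(r^{-3-3\tau+\varepsilon})$ and its flux over $S_r$ is $O(r^{n-4-3\tau+\varepsilon})\to 0$ — again precisely because $\tau>\frac{n-4}{3}$. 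Combining the three steps gives $m_2^{(x)}(g)=m_2^{(\bar x)}(g)$. What deserves emphasis is that, just as for well-definedness (Theorem \ref{thm1}), it is the antisymmetry of $P$ together with its divergence-freeness — not any pointwise decay — that absorbs the too-slowly-decaying $\partial^2\Phi$-contribution, while the integrability of $L_2$ enters only through the flux-independence of the first step.
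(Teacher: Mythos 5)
Your proposal is correct and follows essentially the same route as the paper's proof: the surface-independence step via the divergence form of $L_2$ and its integrability, Bartnik's asymptotics for the transition map after a Euclidean motion, the elimination of one second-derivative term by the antisymmetry of $P$ in $k,l$, and the absorption of the remaining term $P^{ijkl}\partial_j\partial_l\varphi^k$ by integrating by parts on $S_r$ using $P^{ijkl}=-P^{jikl}$ together with $\nabla_jP^{ijkl}=0$, all remainders being $O(r^{-3-3\tau})$ so that $\tau>\frac{n-4}{3}$ suffices. The only cosmetic differences are that you invoke the tensoriality of $P$ where the paper verifies $P^{ijkl}-\hat P^{ijkl}=O(r^{-2-2\tau})$ by an explicit cancellation of the third derivatives of $\varphi$, and you use the identity $\int_{S_r}\partial_jA^{ij}\,\nu_i\,dS=0$ for antisymmetric $A$ where the paper splits $\partial_j$ into tangential and normal parts and computes the normal contribution explicitly.
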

\begin{proof}
The argument follows closely the one given by Bartnik  in the proof of ADM mass \cite{Bar}. See also \cite{LP, M}. The key is to realize that when one changes the coordinates, some extra terms which do not decay fast enough to have vanishing integral at infinity can be gathered in a divergence form.  The first step is the same as observed in \cite{Bar,LP}. For the convenience of the reader, we sketch it.\\
\vspace{1mm}

 \noindent{\bf Step 1.}
  Suppose $\{x^i\}$ and $\{\hat x^i\}$ are two choices of
coordinates at infinity on $\cM\smallsetminus K.$  In view of \cite{Bar}, after composing with an Euclidean motion, we may assume
$$\hat x^i=x^i+\varphi^i,\;\quad \mbox{where}\;\,\varphi^i\in C^{2,\alpha}_{1-\tau},$$ for some $0<\alpha<1.$ For the definition of these weighted spaces, please refer to \cite{Bar,LP} for more details.
Then the radial distance functions $r=|x|$ and $\hat r=|\hat x|$ are related by
$$C^{-1}r\leq \hat r\leq Cr, \quad\mbox{with some constant}\quad C>1.$$
Let  $S_R=\{x:r=R\}$ and  $\hat S_R=\{\hat x:\hat r=R\}$ be two spheres and $A_R=\{\hat x:C^{-1}R\leq \hat r\leq CR\}$ an annulus.
 The divergence theorem yields
\begin{eqnarray*}
\bigg|\int_{\hat S_R}P^{ijkl}\partial_l g_{jk}\hat \nu_i d\hat S-\int_{S_R}P^{ijkl}\partial_l g_{jk} \nu_i d S\bigg|
&\leq&\int_{A_R}\big|\partial_i(P^{ijkl}\partial_l g_{jk})\big|dx.
\end{eqnarray*}
Due to (\ref{divergenceL2})
together with the assumption that $L_2$ is integrable and $\tau>\frac{n-4}{3}$, the integral
$$\int_{A_R}\big|\partial_i(P^{ijkl}\partial_l g_{jk})\big|dx\rightarrow0\quad \mbox{as}\quad R\rightarrow\infty.$$
Therefore we can replace $S_{\hat r}$ by $ S_r$ in the definition of $m_2(g)$ without changing the mass.\\
\vspace{1mm}

\noindent {\bf Step 2.} Denote $\partial_i=\frac{\partial}{\partial x^i}$, $\hat{\partial}_i=\frac{\partial}{\partial \hat{x}^i}$, $g_{ij}=g(\partial_i,\partial_j)$ and $\hat g_{ij}=g(\hat{\partial}_i,\hat{\partial}_j),$ then we have
\begin{eqnarray}\label{diffenceofg}
\hat{\partial}_i&=&\partial_i-\partial_i\varphi^s\partial_s+O(r^{-\tau}),\nonumber\\
\hat g_{ij}&=&g_{ij}-\partial_i\varphi^j-\partial_j\varphi^i+O(r^{-2\tau}),\nonumber\\
\hat{\partial}_k\hat{g}_{ij}&=&\partial_k g_{ij}-\partial_k\partial_i\varphi^j-\partial_k\partial_j\varphi^i+O(r^{-1-2\tau}).
\end{eqnarray}
We compute
\begin{eqnarray*}
&&\int_{S_r}\bigg(P^{ijkl}\partial_l g_{jk}-\hat{P}^{ijkl}\hat{\partial}_l\hat{g}_{jk}\bigg)\nu_i d S\\
&=&\int_{S_r}P^{ijkl}(\partial_l g_{jk}-\hat{\partial}_l\hat{g}_{jk})\nu_i d S+\int_{S_r}(P^{ijkl}-\hat{P}^{ijkl})\hat{\partial}_l\hat{g}_{jk}\nu_i d S\\
&=&I+II.
\end{eqnarray*}
In view of  Lemma \ref{divergence-free}, together with (\ref{antisymmetry}) and (\ref{diffenceofg}), we compute
\begin{eqnarray*}
I&=&\int_{S_r}P^{ijkl}(\partial_l\partial_j\varphi^k+\partial_l\partial_k\varphi^j)\nu_i d S+\int_{S_r}O(r^{-3-3\tau})d S\\
&=&\int_{S_r}P^{ijkl}(\partial_l\partial_j\varphi^k)\nu_i d S+\int_{S_r}O(r^{-3-3\tau})d S\\
&=&\int_{S_r}P^{ijkl}(\partial_j\partial_l\varphi^k)\nu_i d S+\int_{S_r}O(r^{-3-3\tau})d S\\
&=&\int_{S_r}P^{ijkl}\partial_j\big((\partial_l\varphi^k) \nu_i\big)d S+\int_{S_r}O(r^{-3-3\tau})d S\\
&=&\int_{S_r}\partial_j\big(P^{ijkl}(\partial_l\varphi^k) \nu_i\big) d S+\int_{S_r}O(r^{-3-3\tau})d S\\
&=&\int_{S_r}[\partial_j-\langle \nu, \partial_j\rangle \nu]\big(P^{ijkl}(\partial_l\varphi^k) \nu_i\big) d S+\int_{S_r}\langle \nu, \partial_j\rangle  \nu\big(P^{ijkl}(\partial_l\varphi^k) \nu_i\big) d S+\int_{S_r}O(r^{-3-3\tau})d S,
\end{eqnarray*}
where the fourth equality follows from (\ref{antisymmetry}) and $\nu(f)=\frac{\partial}{\partial \nu} f$.
The first integral in $I$ vanishes from the divergence theorem. We will
show that the second integral  vanishes.

Since on the coordinate sphere $S_r$, the outward unit normal vector induced by the Euclidean metric $\nu{\triangleq}\nu^i\frac{\partial}{\partial x^i}=\nabla r,$
we thus  have $$ \nu_i\triangleq \delta_{ij}\nu^j=\nu^i=\frac{x^i}{r}.$$
By (\ref{antisymmetry}), we derive
\begin{eqnarray*}
&&\int_{S_r}\langle \nu, \partial_j\rangle \nu\big(P^{ijkl}(\partial_l\varphi^k)\nu_i\big) d S\\
&=&\int_{S_r}\nu_j\nu^t\frac{\partial}{\partial x^t}\big(P^{ijkl}(\partial_l\varphi^k)\nu_i\big) d S\\
&=&\int_{S_r}\nu_j \nu^t \partial_t\big(P^{ijkl}\partial_l\varphi^k\big)\nu_i d S+ \int_{S_r}\nu_j \nu^t P^{ijkl}(\partial_l\varphi^k)\frac{\partial}{\partial x^t}(\nu_i) d S\\
&=&\int_{S_r}\frac{x^jx^t}{r^2} P^{ijkl}(\partial_l\varphi^k)(\frac{\delta_{it}}{r}-\frac{x^ix^t}{r^3}) d S\\
&=&0.
\end{eqnarray*}
Hence we obtain
$$I=\int_{S_r}O(r^{-3-3\tau}) d S.
$$
For the second term $II$, we calculate
$$II=\int_{S_r}\bigg[(P^{ijkl}-\hat{P}^{ijkl})\partial_l g_{jk}\nu_i+O(r^{-3-3\tau})\bigg]dS,$$
and
\begin{eqnarray*}
P^{ijkl}-\hat{P}^{ijkl}&=&P_{ijkl}-\hat P_{ijkl}+O(r^{-2-2\tau})\\
&=&(R_{ijl}^k-\hat R_{ijl}^k)+(R_{jk}-\hat R_{jk})g_{il}-(R_{jl}-\hat R_{jl})g_{ik}-(R_{ik}-\hat R_{ik})g_{jl}\\
&&+(R_{il}-\hat R_{il})g_{jk}+\frac12(R-\hat R)(g_{ik}g_{jl}-g_{il}g_{jk})+O(r^{-2-2\tau}).
\end{eqnarray*}
From the expression of the curvature tensor and the Ricci tensor in local coordinates
\begin{eqnarray*}
R_{ijl}^k&=&-\frac12(\partial_i\partial_k g_{jl}-\partial_i\partial_l
g_{jk}-\partial_j\partial_k g_{il}+\partial_j\partial_l g_{ik})+O(r^{-2-2\tau}),\\
R_{jk}&=&\frac12(\partial_i\partial_k g_{ji}-\partial_i\partial_i g_{jk}
-\partial_j\partial_k g_{ii}+\partial_j\partial_i g_{ik})+O(r^{-2-2\tau}),
\end{eqnarray*}
and  the difference
$$\hat{\partial}_k\hat{g}_{ij}=\partial_k g_{ij}-\partial_k
\partial_i\varphi^j-\partial_k\partial_j\varphi^i+O(r^{-1-2\tau}),$$
we have
\begin{eqnarray*}
R_{ijl}^k-\hat R_{ijl}^k&=&-\frac12\big[\partial_i\partial_k\partial_j\varphi^l+\partial_i\partial_k\partial_l\varphi^j
-\partial_i\partial_l\partial_j\varphi^k-\partial_i\partial_l\partial_k\varphi^j-\partial_j\partial_k\partial_i\varphi^l\\
&&-\partial_j\partial_k\partial_l\varphi^i
+\partial_j\partial_l\partial_i\varphi^k+\partial_j\partial_l\partial_k\varphi^i\big]+O(r^{-2-2\tau})\\
&=&O(r^{-2-2\tau}).
\end{eqnarray*}
Similarly, we have
$$R_{jk}-\hat R_{jk}=O(r^{-2-2\tau}).$$
Thus we obtain $$II=\int_{S_r}O(r^{-3-3\tau}) d S.$$
Combining the two things together, we obtain
$$\int_{S_r}(P^{ijkl}\partial_l g_{jk}-\hat{P}^{ijkl}\hat{\partial}_l
\hat{g}_{jk})\nu_i d S=I+II=\int_{S_r}O(r^{-3-3\tau}) d S,$$
which implies  that
$$\lim_{r\rightarrow\infty}\int_{S_r}(P^{ijkl}\partial_l g_{jk}-
\hat{P}^{ijkl}\hat{\partial}_l\hat{g}_{jk})\nu_i d S=0, $$
when $\tau>\frac{n-4}{3}$.
Therefore we conclude $m_2(g)=m_2(\hat g)$ and  finish the proof.
\end{proof}

For the Euclidean metric, the GBC mass $m_2$ is trivially equal to zero. Examples with non-vanishing GBC mass  will be given in Section $6$ later.

\

\section{Positive mass theorem in the graph case}
In this section, we investigate the special case that asymptotically flat
manifolds are given as graphs of asymptotically flat functions over Euclidean space $\mathbb{R}^n.$
As in the Riemannian positive mass theorem studied by  Lam \cite{Lam},
for the  Gauss-Bonnet-Chern mass, we can show that the corresponding Riemannian positive mass theorem holds
for graphs when the Gauss-Bonnet curvature replaces  the scalar curvature in all dimensions $n\geq 5$.

Following the notation in \cite{Lam}, let $(\cM^n,g)=(\mathbb{R}^n,\delta+df\otimes df)$
be the graph of a smooth asymptotically flat function $f:\mathbb{R}^n\rightarrow \mathbb{R}$
 defined as in Definition \ref{f}. Then
$$g_{ij}=\delta_{ij}+f_if_j,$$
and the inverse of $g_{ij}$ is
\begin{equation}\label{ginverse}
g^{ij}=\delta_{ij}-\frac{f_if_j}{1+|\nabla f|^2},
\end{equation}
where the norm and the derivative  $\nabla f$ are taken with respect to
the flat metric $\delta.$ It is clear that such a graph is an asymptotically flat manifold
of order $\tau$ in the sense of Definition \ref{m2}.
The Christoffel symbols $\Gamma_{ij}^k$ with respect to the metric $g$ and its derivatives can be computed directly
\begin{eqnarray}\label{Christoffel}
\Gamma_{ij}^k&=&\frac{f_{ij}f_k}{1+|\nabla f|^2},\\
\partial_l\Gamma_{ij}^k&=&\frac{f_{ijl}f_k+f_{ij}f_{kl}}{1+|\nabla f|^2}-\frac{2f_{ij}f_k f_sf_{sl}}{(1+|\nabla f|^2)^2}\nonumber.
\end{eqnarray}
The expression for the curvature tensor follows directly. For the convenience of  the reader, we include some computations   in the following lemma.
\begin{lemm}\label{curvature}
$$
R_{ijkl}=\frac{f_{ik}f_{jl}-f_{il}f_{jk}}{1+|\nabla f|^2}.
$$
\end{lemm}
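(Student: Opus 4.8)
The plan is to compute the Riemann curvature tensor of the graph metric $g_{ij} = \delta_{ij} + f_i f_j$ directly from the formula
$$
R_{ijkl} = g_{km}\left(\partial_i\Gamma^m_{jl} - \partial_j\Gamma^m_{il} + \Gamma^m_{is}\Gamma^s_{jl} - \Gamma^m_{js}\Gamma^s_{il}\right),
$$
using the expressions for $\Gamma^k_{ij}$ and $\partial_l\Gamma^k_{ij}$ already recorded in \eqref{Christoffel}. The key structural observation is that every Christoffel symbol carries a factor $f_k$ (one derivative of $f$ contracted against the last index), so each quadratic term $\Gamma\Gamma$ contains a factor $f_s f_s$-type contraction, and in fact the two quadratic terms will combine with the "$f_{ij}f_{kl}$'' pieces of the $\partial\Gamma$ terms in a way that telescopes. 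I would organize the computation by first writing
$$
g_{km}\partial_i\Gamma^m_{jl} = \frac{f_{jli}f_k + f_{jl}f_{ki}}{1+|\nabla f|^2} - \frac{2 f_{jl} f_k f_s f_{si}}{(1+|\nabla f|^2)^2} + (\text{lower-order in } f_k),
$$
being careful that lowering the index $m$ with $g_{km} = \delta_{km} + f_k f_m$ produces an extra $f_k f_m \partial_i\Gamma^m_{jl}$ term which must be tracked, not discarded, since we want an exact identity.

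Next I would antisymmetrize in $i \leftrightarrow j$. The terms $f_{jli}f_k$ and $f_{ilj}f_k$ cancel because third derivatives of $f$ are symmetric, so all third-derivative terms drop out — this is the first simplification. What remains from $g_{km}(\partial_i\Gamma^m_{jl} - \partial_j\Gamma^m_{il})$ is
$$
\frac{f_{jl}f_{ki} - f_{il}f_{kj}}{1+|\nabla f|^2} - \frac{2(f_{jl} f_s f_{si} - f_{il} f_s f_{sj}) f_k}{(1+|\nabla f|^2)^2} + (\text{terms from lowering } m).
$$
Then I would compute the quadratic piece $g_{km}(\Gamma^m_{is}\Gamma^s_{jl} - \Gamma^m_{js}\Gamma^s_{il})$. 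Using $\Gamma^m_{is}\Gamma^s_{jl} = \dfrac{f_{is}f_m f_{jl}f_s}{(1+|\nabla f|^2)^2}$ and lowering $m$ via $g_{km}$, one gets a denominator $(1+|\nabla f|^2)^2$ times $(\delta_{km} + f_k f_m) f_m = f_k(1+|\nabla f|^2)$, producing exactly the combination needed to cancel the $\dfrac{2(\cdots)f_k}{(1+|\nabla f|^2)^2}$ correction terms and the "lowering'' terms. After all cancellations only
$$
R_{ijkl} = \frac{f_{ik}f_{jl} - f_{il}f_{jk}}{1+|\nabla f|^2}
$$
should survive (after also using the symmetry $f_{ki} = f_{ik}$ and relabeling to match the stated index pattern).

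The main obstacle I anticipate is purely bookkeeping: keeping the factors of $(1+|\nabla f|^2)$ in the denominators consistent, and in particular handling the index-lowering $g_{km} = \delta_{km} + f_kf_m$ exactly rather than up to decay — since Lemma \ref{curvature} is an exact identity, not an asymptotic one, no $O(r^{-\cdots})$ error terms are permitted and every term must genuinely cancel. A clean way to manage this is to introduce the shorthand $v = 1 + |\nabla f|^2$ and the vector $a_i = f_s f_{si} = \tfrac12 \partial_i |\nabla f|^2$, verify that the "$f_k$-proportional'' terms group as $\dfrac{f_k}{v^2}\big( f_{jl} a_i - f_{il} a_j \big)$ with a coefficient that the $\Gamma\Gamma$ contribution exactly negates, and conclude. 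Once $R_{ijkl}$ is in hand, contracting gives $R_{jl} = \dfrac{f_{jl}\Delta f - f_{js}f_{sl}}{v} - (\text{correction from } g^{ik})$ and then $L_2$, but that is for later sections; here only the curvature tensor formula is claimed.
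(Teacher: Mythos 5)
Your proposal is correct and follows essentially the same route as the paper: a direct coordinate computation from the Christoffel symbols $\Gamma^k_{ij}=f_{ij}f_k/(1+|\nabla f|^2)$, with the third-derivative terms cancelling by symmetry and the $f_k$-proportional terms cancelling exactly against the $\Gamma\Gamma$ and index-lowering contributions. The only cosmetic difference is that the paper first records the $(1,3)$-tensor $R^l_{ijk}$ (with its explicit $f_sf_l/(1+|\nabla f|^2)^2$ correction term) and then lowers the index with $g_{km}=\delta_{km}+f_kf_m$, whereas you interleave the lowering from the start; the cancellations you anticipate are exactly the ones that occur.
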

 \begin{proof}
 We begin with the $(1,3)$-type curvature tensor:
 \begin{eqnarray*}
 R_{ijk}^l&=&\partial_i\Gamma_{jk}^l-\partial_j\Gamma_{ik}^l+\Gamma_{is}^l\Gamma_{jk}^s-\Gamma_{js}^l\Gamma_{ik}^s\\
 &=&\frac{f_{jki}f_l+f_{jk}f_{li}}{1+|\nabla f|^2}-\frac{2f_{jk}f_l f_{si}f_s}{(1+|\nabla f|^2)^2}-\frac{f_{ikj}f_l+f_{ik}f_{lj}}{1+|\nabla f|^2}\\
 &&+\frac{2f_{ik}f_l f_{sj}f_s}{(1+|\nabla f|^2)^2}+\frac{f_{is}f_{jk}f_l f_s-f_{js}f_{ik}f_l f_s}{(1+|\nabla f|^2)^2}\\
 &=&\frac{f_{li}f_{jk}-f_{ik}f_{lj}}{1+|\nabla f|^2}+\frac{(f_{ik}f_{sj}-f_{jk}f_{si})f_s f_l}{(1+|\nabla f|^2)^2}\\
 &=&\frac{f_{il}f_{jk}-f_{ik}f_{jl}}{1+|\nabla f|^2}+\frac{(f_{ik}f_{js}-f_{jk}f_{is})f_s f_l}{(1+|\nabla f|^2)^2}.
 \end{eqnarray*}
Then we have
\begin{eqnarray*}
R_{ijkl}&=&R_{ijl}^m g_{km}\\
&=&\left(\frac{f_{im}f_{jl}-f_{il}f_{jm}}{1+|\nabla f|^2}+\frac{(f_{il}f_{js}-f_{jl}f_{is})f_s f_m}{(1+|\nabla f|^2)^2}\right)(\delta_{mk}+f_mf_k)\\
&=&\frac{f_{ik}f_{jl}-f_{il}f_{jk}}{1+|\nabla f|^2}+\frac{(f_{im}f_{jl}-f_{il}f_{jm})f_m f_k}{1+|\nabla f|^2}+\frac{(f_{il}f_{js}-f_{jl}f_{is})f_s f_k}{1+|\nabla f|^2}\\
&=&\frac{f_{ik}f_{jl}-f_{il}f_{jk}}{1+|\nabla f|^2}.
\end{eqnarray*}
 \end{proof}
\begin{rema}
This proof uses the intrinsic definition of the curvature tensor. One can also calculate it from the Gauss formula via the extrinsic geometry.
\end{rema}
The divergence-free property of $P$ enables us to  express the Gauss-Bonnet curvature $L_2$ as a divergence term. This is a key ingredient to show the corresponding positive mass theorem for the GBC mass in the graph case.
\begin{lemm}\label{divergenceform}
$$
\partial_i(P^{ijkl}\partial_l g_{jk})=\frac 12 L_2.
$$
\end{lemm}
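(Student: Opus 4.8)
The plan is to establish the identity by a direct computation that exploits the special form of the graph metric together with the divergence-free property of $P$ from Lemma~\ref{divergence-free}. Since $g_{jk}=\delta_{jk}+f_jf_k$, one has $\partial_lg_{jk}=f_{jl}f_k+f_jf_{kl}$, and because $P^{ijkl}$ is antisymmetric in $k\leftrightarrow l$ while $f_{kl}$ is symmetric, the term $P^{ijkl}f_jf_{kl}$ vanishes identically; hence $P^{ijkl}\partial_lg_{jk}=P^{ijkl}f_{jl}f_k$. Differentiating,
\[
\partial_i\big(P^{ijkl}f_{jl}f_k\big)=(\partial_iP^{ijkl})f_{jl}f_k+P^{ijkl}f_{ijl}f_k+P^{ijkl}f_{jl}f_{ik},
\]
and the middle term drops out because the third derivatives $f_{ijl}$ are totally symmetric whereas $P^{ijkl}$ is antisymmetric in $i\leftrightarrow j$. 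So it remains to compute $(\partial_iP^{ijkl})f_{jl}f_k$.

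For that I would use Lemma~\ref{divergence-free} in the form $0=\nabla_iP^{ijkl}=\partial_iP^{ijkl}+\Gamma^i_{im}P^{mjkl}+\Gamma^j_{im}P^{imkl}+\Gamma^k_{im}P^{ijml}+\Gamma^l_{im}P^{ijkm}$, substitute the explicit Christoffel symbols $\Gamma^c_{ab}=f_{ab}f_c/(1+|\nabla f|^2)$ from \eqref{Christoffel}, and note that $\Gamma^j_{im}P^{imkl}=0$ (symmetric contracted against antisymmetric). Writing $w_m:=f_{ms}f_s$, this yields
\[
\partial_iP^{ijkl}=-\frac{w_m}{1+|\nabla f|^2}P^{mjkl}-\frac{f_k\,f_{im}}{1+|\nabla f|^2}P^{ijml}-\frac{f_l\,f_{im}}{1+|\nabla f|^2}P^{ijkm}.
\]
Contracting with $f_{jl}f_k$ and using $f_kf_k=|\nabla f|^2$ and $f_lf_{jl}=w_j$, the two $w$-terms (coming from the first and third summands) cancel one another after relabelling $i\leftrightarrow j$ and using $P^{ijkl}=-P^{jikl}$, leaving $(\partial_iP^{ijkl})f_{jl}f_k=-\frac{|\nabla f|^2}{1+|\nabla f|^2}f_{ik}f_{jl}P^{ijkl}$. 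Adding back the surviving term $P^{ijkl}f_{jl}f_{ik}$ gives
\[
\partial_i\big(P^{ijkl}\partial_lg_{jk}\big)=\Big(1-\frac{|\nabla f|^2}{1+|\nabla f|^2}\Big)f_{ik}f_{jl}P^{ijkl}=\frac{f_{ik}f_{jl}P^{ijkl}}{1+|\nabla f|^2}.
\]

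Finally I would match this with $\frac12 L_2$. By Lemma~\ref{curvature}, $R_{ijkl}=(f_{ik}f_{jl}-f_{il}f_{jk})/(1+|\nabla f|^2)$, and swapping $k\leftrightarrow l$ together with $P^{ijkl}=-P^{ijlk}$ turns $f_{il}f_{jk}P^{ijkl}$ into $-f_{ik}f_{jl}P^{ijkl}$; hence $L_2=R_{ijkl}P^{ijkl}=\frac{2}{1+|\nabla f|^2}f_{ik}f_{jl}P^{ijkl}$, which is exactly twice the displayed expression, establishing the claimed identity. I expect the only delicate part to be the index bookkeeping in the middle paragraph: keeping track of slot positions when combining the divergence-free identity with the explicit Christoffel symbols, and verifying that every term involving $w_m=f_{ms}f_s$ cancels in pairs so that only the $|\nabla f|^2$-term survives.
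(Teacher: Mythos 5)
Your proof is correct and follows essentially the same route as the paper's: both use the divergence-free property of $P$ (Lemma \ref{divergence-free}) together with the explicit Christoffel symbols \eqref{Christoffel} to evaluate $\partial_iP^{ijkl}$, and the (anti)symmetries \eqref{antisymmetry} to eliminate the third-derivative and symmetric-contraction terms, arriving at the same intermediate identity $\partial_i(P^{ijkl}\partial_lg_{jk})=f_{ik}f_{jl}P^{ijkl}/(1+|\nabla f|^2)$. Your preliminary reduction $P^{ijkl}\partial_lg_{jk}=P^{ijkl}f_{jl}f_k$ is only a minor streamlining of the paper's computation.
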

\begin{proof}
$$\partial_i(P^{ijkl}\partial_l g_{jk})=\partial_iP^{ijkl}\partial_l g_{jk}+P^{ijkl}\partial_i\partial_l g_{jk}.$$
We begin with the first term. Here it is important to use Lemma \ref{divergence-free} to eliminate the terms of the third order derivatives of $f$. In view of (\ref{antisymmetry}) and Lemma \ref{divergence-free}, we compute
\begin{eqnarray*}
&&(\partial_iP^{ijkl})\partial_l g_{jk}\\
&=&(\nabla_iP^{ijkl}-P^{sjkl}\Gamma_{is}^i-P^{iskl}\Gamma_{is}^j-P^{ijsl}\Gamma_{is}^k-P^{ijks}\Gamma_{is}^l)\partial_l g_{jk}\\
&=&\!-\left(P^{sjkl}\frac{f_{is}f_i}{1+|\nabla f|^2}+P^{iskl}\frac{f_{is}f_j}{1+|\nabla f|^2}+P^{ijsl}\frac{f_{is}f_k}{1+|\nabla f|^2}+P^{ijks}\frac{f_{is}f_l}{1+|\nabla f|^2}\right)(f_{jl}f_k+f_{kl}f_j)\\
&=&-\left(P^{sjkl}\frac{f_{is}f_{jl}f_if_k}{1+|\nabla f|^2}+P^{ijsl}\frac{f_{is} f_{jl}|\nabla f|^2+f_{is} f_{kl} f_k f_j}{1+|\nabla f|^2}+P^{ijks}\frac{f_{is} f_{jl}f_k f_l+f_{is} f_{kl} f_j f_l}{1+|\nabla f|^2}\right)\\
&=&-\frac{P^{ijkl}}{1+|\nabla f|^2}\left((f_{is} f_{jl}+f_{il}f_{js})f_k f_s+(f_{ik} f_{sl}+f_{il}f_{ks})f_j f_s+|\nabla f|^2f_{ik} f_{jl}\right)\\
&=&-\frac{|\nabla f|^2}{1+|\nabla f|^2}f_{ik}f_{jl}P^{ijkl},
\end{eqnarray*}
where we have relabeled the indices in the fourth equality and
used property (\ref{antisymmetry}) in the third and fifth equalities.

The second term is also simplified by (\ref{antisymmetry}). Namely,
\begin{eqnarray*}
P^{ijkl}\partial_i\partial_l g_{jk}&=&P^{ijkl}\partial_i\partial_l(f_j f_k)\\
&=&P^{ijkl}(f_{jli}f_k+f_{kli}f_j+f_{ki}f_{jl}+f_{ji}f_{kl})\\
&=&P^{ijkl}f_{ki}f_{jl}.
\end{eqnarray*}
Combining these two terms together, we arrive at
\begin{eqnarray}\label{addx1}
\partial_i(P^{ijkl}\partial_l g_{jk})
&=&P^{ijkl}\frac{f_{ik}f_{jl}}{1+|\nabla f|^2}\nonumber\\
&=&\frac 12P^{ijkl}\big(\frac{f_{ik}f_{jl}-f_{il}f_{jk}}{1+|\nabla f|^2}\big).
\end{eqnarray}
Recall that $$L_2=P^{ijkl}R_{ijkl},$$
and invoke the expression of the curvature tensor in Lemma \ref{curvature}, we complete the proof of the lemma.
\end{proof}

 Lam showed
a similar result for the scalar curvature, which is the crucial observation in \cite{Lam}.  See also the first paragraph of Section
$8$ below.

\

Now we are ready to prove one of our main results, Theorem \ref{mainthm2}.

\

\noindent {\it Proof of  Theorem \ref{mainthm2}.}
In view of  Lemma \ref{divergenceform} and the divergence theorem in $(\mathbb{R}^n,\delta)$, we have
\begin{eqnarray*}
m_2&=&\lim_{r\rightarrow\infty}c_2(n)\int_{S_r}P^{ijkl}\partial_l g_{jk}\nu_{i}dS\\
&=&c_2(n)\int_{\mathbb{R}^n}\partial_i(P^{ijkl}\partial_l g_{jk})dV_{\delta}\\
&=&\frac {c_2(n)}{2}\int_{\mathbb{R}^n}L_2dV_{\delta}\\
&=&\frac  {c_2(n)}{2}\int_{{\cM^n}}\frac{L_2}{\sqrt{1+|\nabla f|^2}}dV_g,
\end{eqnarray*}
where the last equality holds due to the fact
$$dV_g=\sqrt{\mbox{det}g}dV_{\delta}=\sqrt{1+|\nabla f|^2}dV_{\delta}.$$
\qed

\section{Penrose inequality for graphs on $\mathbb{R}^n$ }

In this section we investigate the Penrose inequality related to the GBC mass for the manifolds which can be realized as graphs over $\mathbb{R}^n$.
Let $\Omega$ be a bounded open set in $\mathbb{R}^n$ and $\Sigma=\partial\Omega.$  If $f:\mathbb{R}^n\setminus\Omega\rightarrow \mathbb{R}$ is a smooth
asymptotically flat function such that each connected component of $\Sigma$ is in a level set of $f$ and
\begin{equation}\label{x1}
|\nabla f(x)|\rightarrow\infty \quad\hbox{  as }
x\rightarrow\Sigma,\end{equation}
then the graph of $f$, $(\cM^n,g)=(\mathbb{R}^n\setminus\Omega,\delta+df\otimes df),$ is an asymptotically flat manifold with an area  outer
minimizing
horizon $\Sigma.$  See Remark \ref{rem5.1} below. Without loss of generality we may assume that $\Sigma$  is included in $f^{-1}(0)$.
In this case one can
identify $\{(x, f(x)) \,|\, x\in \Sigma\}$ with $\Sigma$.

On $\Sigma$, the outer unit normal vector induced by $\delta$ is $$\nu\triangleq \nu^i\frac{\partial}{\partial x^i }=-\frac{\nabla
f}{|\nabla f|}.$$
Then
$$\nu^i=-\frac{f^i}{|\nabla f|}=-\frac{f_i}{|\nabla f|}\quad\mbox{and}\quad \nu_i\triangleq \delta_{ij}\nu^j=\nu^i.$$
As in the proof of Theorem \ref{mainthm2}, integrating by parts now gives an extra boundary term
\begin{eqnarray*}
m_2&=&\lim_{r\rightarrow\infty}c_2(n)\int_{S_r}P^{ijkl}\partial_l g_{jk} \nu_{i}dS\\
&=&\frac{c_2(n)}{2}\int_{\mathbb{R}^n\setminus\Omega}\frac{L_2}{\sqrt{1+|\nabla f|^2}}dV_g-c_2(n)\int_{\Sigma}P^{ijkl}\partial_l g_{jk}\nu_{i}d S\\
&=&\frac{c_2(n)}{2}\int_{\cM^n}\frac{L_2}{\sqrt{1+|\nabla f|^2}}dV_g-c_2(n)\int_{\Sigma}P^{ijkl}(f_{jl}f_k+f_{kl}f_j) \nu_{i}d S\\
&=&\frac{c_2(n)}{2}\int_{\cM^n}\frac{L_2}{\sqrt{1+|\nabla f|^2}}dV_g-c_2(n)\int_{\Sigma}P^{ijkl}f_{jl}f_k \nu_{i}d S,
\end{eqnarray*}
where the last term in the third equality vanishes because of the symmetry of $P$.
We derive from (\ref{P}) and (\ref{ginverse}) that
\begin{eqnarray*}
&&P^{ijkl}f_{jl}f_k \nu_{i}\\
&=&R^{ijkl}f_{jl}f_k \nu_{i}+R^{jk}(f_{ji}f_k -\frac{f_{jl}f_l f_k}{1+|\nabla f|^2}f_i)\nu_{i}-R^{ik}(f_{jj}f_k-\frac{f_{jl}f_l f_j}{1+|\nabla f|^2}f_k)
\nu_{i}\\
&&+R^{il}f_j\frac{f_{jl}}{1+|\nabla f|^2}\nu_{i}-R^{jl}f_{jl}\frac{f_i}{1+|\nabla f|^2}\nu_{i}+\frac12 R\frac{f_{jj}f_i-f_{ji}f_j}{1+|\nabla f|^2}
\nu_{i}\\
&=&I+II+III+IV+V+VI.
\end{eqnarray*}

Recall  that we have assumed that  $\Sigma$ is in a level set of $f$.
At any given point $p\in\Sigma$, we choose the  coordinates such that $\{\frac{\partial}{\partial{x^2}},\cdots,\frac{\partial}{\partial{x^n}}\}$ denotes the tangential space
 of $\Sigma$ and $\frac{\partial}{\partial{x^1}}$ denotes the normal direction of $\Sigma$. To clarify the notations, in the following
  we will use the convention that the Latin letters stand for the index: $1,2,\cdots, n$
and the Greek letters stand for the index: $2, \cdots, n$.  Now the computations are all done at the given point $p$.
It is easy to see that  $$f_{\alpha}=0 \quad\mbox{and}\quad f_{\alpha\beta}=A_{\alpha\beta}|\nabla f|=A_{\alpha\beta}|f_1|,$$
where $A_{\alpha\beta}$ is the second fundamental form of the isometric embedding $(\Sigma, h)$ into the  Euclidean space $\R^n$. In other words, $h$ is
the induced metric. Note that there is also an isometric embedding from $(\Sigma, h)$ into the graph as the boundary of the graph.

Before computing further, let us introduce a notation.  $H_k$ denotes the $k$-th mean curvature, which is defined by the $k$-th elementary symmetric function on the principal curvatures
of   the second fundamental form $A$.
In the following, we  calculate each  term in $P^{ijkl}f_{jl}f_k \nu_{i}$. The point in the computation is to distinguish the tangential direction and the normal direction carefully.
\begin{eqnarray*}
I&=&R^{1\alpha1\beta}f_{\alpha\beta}f_1\nu_1=R^{1\alpha1\beta}A_{\alpha\beta}|f_1|f_1(-\frac{f_1}{|f_1|})\\
&=&-R^{1\alpha1\beta}A_{\alpha\beta}f_1^2,\\
II&=&R^{j1}(f_{j1}f_1-\frac{f_{j1}f_1^3}{1+f_1^2}) \nu_1
=(R^{j1}f_{j1})\frac{f_1}{1+f_1^2}(-\frac{f_1}{|f_1|})\\
&=&-(R^{j1}f_{j1})\frac{f_1^2}{(1+f_1^2)|f_1|},\\
III&=&-R^{11}\left((H_1|f_1|+f_{11})f_1-\frac{f_{11}f_1^3}{1+f_1^2}\right) \nu_1\\
&=&-R^{11}\left(H_1|f_1|f_1+\frac{f_{11}f_1}{1+f_1^2}\right)(-\frac{f_1}{|f_1|})\\
&=&R^{11}\left(H_1 f_1^2+\frac{f_{11}f_1^2}{(1+f_1^2)|f_1|}\right),\\
IV&=&R^{1l}f_{1l}\frac{f_1}{1+f_1^2} \nu_1
=-R^{1l}f_{1l}\frac{f_1^2}{(1+f_1^2)|f_1|},\\
V&=&-(R^{jl}f_{jl})\frac{f_1}{1+f_1^2} (-\frac{f_1}{|f_1|})
=(R^{jl}f_{jl})\frac{f_1^2}{(1+f_1^2)|f_1|}\\
&=&(2R^{1l}f_{1l}+R^{\alpha\beta}f_{\alpha\beta}-R^{11}f_{11})\frac{f_1^2}{(1+f_1^2)|f_1|}\\
&=&(2R^{1l}f_{1l}+R^{\alpha\beta}A_{\alpha\beta}|f_1|-R^{11}f_{11})\frac{f_1^2}{(1+f_1^2)|f_1|},\\
VI&=&\frac 12 R\frac{(H_1|f_1|+f_{11})f_1-f_{11}f_1}{1+f_1^2}(-\frac{f_1}{|f_1|})\\
&=&-\frac 12 RH_1\frac{f_1^2}{1+f_1^2}.
\end{eqnarray*}
Noting that $II+IV$ cancels the first term of $V$ and the second term in $III$ cancels the third term in $V$ we get
\begin{eqnarray}\label{sum}
P^{ijkl}f_{jl}f_k \nu_{i}
&=&I+II+III+IV+V+VI\nonumber\\
&=&-R^{1\alpha1\beta}A_{\alpha\beta}f_1^2+R^{11}H_1 f_1^2+R^{\alpha\beta}A_{\alpha\beta}\frac{f_1^2}{1+f_1^2}-\frac 12 RH_1\frac{f_1^2}{1+f_1^2}.
\end{eqnarray}

Similarly, for the embedding  $(\Sigma^{n-1}, h)\hookrightarrow (\cM^n,g)$ we denote  the outer unit normal vector induced by $g$  by
$\tilde \nu$, and the corresponding second fundamental form by $\tilde A_{\alpha\beta}.$
Then a direct calculation gives
$$\tilde A_{\alpha\beta}=\frac{1}{\sqrt{1+f_1^2}}A_{\alpha\beta}.$$

\begin{rema} \label{rem5.1} From the above formula we have the following equivalent statements, provided $\Sigma\subset\mathbb{R}^n$ is strictly mean convex:
\begin{itemize} \item $|\nabla f|=\infty$ on $\Sigma$;
\item $\Sigma$ is minimal, i.e., $\tr \tilde A=0$;
\item $\Sigma$ is totally geodesic, i.e., $\tilde A=0$.
\end{itemize}
Therefore $\Sigma$ is an area-minimizing  horizon if and only if $|\nabla f|=\infty$ on $\Sigma$ and if and only if  $\Sigma$ is  totally geodesic. Hence
$|\nabla f|=\infty$  is a natural assumption.
\end{rema}

 By the Gauss equation, viewing $(\Sigma, h)$ as a hypersurface in $\R^n$, we have
$$\hat R^{\alpha\beta\gamma\delta}=A^{\alpha\gamma}A^{\beta\delta}-A^{\alpha\delta}A^{\beta\gamma},$$
where $\hat R^{\alpha\beta\gamma\delta}$ is the corresponding curvature tensor with respect to the induced metric $h$ on $\Sigma$.
On the other hand, viewing $(\Sigma, h)$ as a hypersurface in $(\cM, g)$ we have
\begin{eqnarray*}
\hat R^{\alpha\beta\gamma\delta}=R^{\alpha\beta\gamma\delta}+\tilde A^{\alpha\gamma}\tilde A^{\beta\delta}-\tilde A^{\alpha\delta}\tilde A^{\beta\gamma}=R^{\alpha\beta\gamma\delta}+\frac{A^{\alpha\gamma}A^{\beta\delta}-A^{\alpha\delta}A^{\beta\gamma}}{1+f_1^2},
\end{eqnarray*}
which yields
\begin{eqnarray}\label{Rie.tensor}
R^{\alpha\beta\gamma\delta}&=&\frac{f_1^2}{1+f_1^2}(A^{\alpha\gamma}A^{\beta\delta}-A^{\alpha\delta}A^{\beta\gamma})\nonumber\\
&=&\frac{f_1^2}{1+f_1^2}\hat R^{\alpha\beta\gamma\delta}.
\end{eqnarray}
Similarly, we have
\begin{eqnarray}\label{Ricci}
\tilde F^{\alpha\beta}&\triangleq& R^{\alpha\gamma\beta\delta}g_{\gamma\delta}\nonumber\\
&=&\frac{|\nabla f|^2}{1+|\nabla f|^2}(H_1A_{\alpha\beta}-A_{\alpha\gamma}A_{\gamma\beta})=\frac{|\nabla f|^2}{1+|\nabla f|^2} \hat R^{\alpha\beta}
\end{eqnarray}
and
\begin{equation}\label{Scalar}
\tilde F \triangleq R^{\alpha\beta}g_{\alpha\beta}=\frac{|\nabla f|^2}{1+|\nabla f|^2}(H_1^2-A_{\alpha\beta}A_{\alpha\beta})=\frac{|\nabla f|^2}{1+|\nabla f|^2} \hat R.
\end{equation}
We then go back to equality (\ref{sum}). Note the facts
$$R^{\alpha\beta}=\tilde F^{\alpha\beta}+R^{1\alpha1\beta} g_{11}=\tilde F^{\alpha\beta}+R^{1\alpha1\beta}(1+f_1^2),$$
and
$$R=\tilde F+2R^{11}g_{11}=\tilde F+2R^{11}(1+f_1^2).$$
We compute
\begin{eqnarray*}
P^{ijkl}f_{jl}f_k \nu_{i}&=&-R^{1\alpha1\beta}A_{\alpha\beta}f_1^2+R^{11}H_1 f_1^2+R^{\alpha\beta}A_{\alpha\beta}\frac{f_1^2}{(1+f_1^2)}-\frac 12 RH_1\frac{f_1^2}{1+f_1^2}\\
&=&-R^{1\alpha1\beta}A_{\alpha\beta}f_1^2+R^{11}H_1 f_1^2+\big[\tilde F^{\alpha\beta}+R^{1\alpha1\beta}(1+f_1^2)\big]A_{\alpha\beta}\frac{f_1^2}{(1+f_1^2)}\\
&&-\frac 12\big[\tilde F+2R^{11}(1+f_1^2)\big] H_1\frac{f_1^2}{1+f_1^2}\\
&=&(\tilde F^{\alpha\beta}A_{\alpha\beta}-\frac 12 \tilde FH_1) \frac{f_1^2}{1+f_1^2}\\
&=&(\hat R^{\alpha\beta}-\frac 12 \hat R h^{\alpha\beta})A_{\alpha\beta}\bigg(\frac{|\nabla f|^2}{1+|\nabla f|^2}\bigg)^2,
\end{eqnarray*}
where we have used (\ref{Ricci}) and (\ref{Scalar}).
Therefore we conclude
$$
\int_{\Sigma}P^{ijkl}f_{jl}f_k\nu_id S=\int_{\Sigma}\left(\frac{|\nabla f|^2}{1+|\nabla f|^2}\right)^2(\hat R^{\alpha\beta}-\frac 12 \hat R h^{\alpha\beta})A_{\alpha\beta}d S.
$$
One can check that
$$-(\hat R^{\alpha\beta}-\frac 12 \hat R h^{\alpha\beta})A_{\alpha\beta}=3H_3.$$

Making use of the assumption that $|\nabla f(x)|\rightarrow\infty$ as $x\rightarrow\Sigma$,
we have
\begin{eqnarray*}
m_2
&=&\frac{c_2(n)}{2}\int_{\cM^n}\frac{L_2}{\sqrt{1+|\nabla f|^2}}dV_g-c_2(n)\int_{\Sigma}P^{ijkl}f_{jl}f_k \nu_{i}d S\nonumber\\
&=&\frac{c_2(n)}{2}\int_{\cM^n}\frac{L_2}{\sqrt{1+|\nabla f|^2}}dV_g+c_2(n)\int_{\Sigma}\left(\frac{|\nabla f|^2}{1+|\nabla f|^2}\right)^2 3H_3 dS\nonumber\\
&=&\frac{c_2(n)}{2}\int_{\cM^n}\frac{L_2}{\sqrt{1+|\nabla f|^2}}dV_g+c_2(n)\int_{\Sigma} 3H_3 dS.
\end{eqnarray*}

To summarize, we have showed that
\begin{prop}\label{penrose}
Let $\Omega$ be a bounded open set in $\mathbb{R}^n$ and $\Sigma=\partial\Omega.$ If $f:\mathbb{R}^n\setminus\Omega\rightarrow \mathbb{R}$ is a smooth
asymptotically flat function such that each connected component of $\Sigma$ is in a level set of $f$ and $|\nabla f(x)|\rightarrow\infty$ as
$x\rightarrow\Sigma$.  Then
$$m_2=\frac{c_2(n)}{2}\int_{\cM^n}\frac{L_2}{\sqrt{1+|\nabla f|^2}}dV_g+c_2(n)\int_{\Sigma} 3H_3 dS.$$
\end{prop}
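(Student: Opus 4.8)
The plan is to rerun the integration-by-parts argument from the proof of Theorem~\ref{mainthm2}, but now on $\mathbb{R}^n\setminus\Omega$ rather than on all of $\mathbb{R}^n$, so that the divergence theorem in $(\mathbb{R}^n,\delta)$ produces an extra boundary integral over $\Sigma$. By Lemma~\ref{divergenceform} the bulk contribution is still $\frac{c_2(n)}{2}\int_{\cM^n} L_2/\sqrt{1+|\nabla f|^2}\,dV_g$, so the whole problem reduces to showing that the boundary term $-c_2(n)\int_\Sigma P^{ijkl}\partial_l g_{jk}\,\nu_i\,dS$ equals $c_2(n)\int_\Sigma 3H_3\,dS$. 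Writing $g_{jk}=\delta_{jk}+f_jf_k$ and using the antisymmetry (\ref{antisymmetry}) of $P$ in its last two slots, the $f_{kl}f_j$ part drops out, so it suffices to evaluate $\int_\Sigma P^{ijkl} f_{jl} f_k\,\nu_i\,dS$.

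For this I would work pointwise at $p\in\Sigma$ in coordinates adapted to $\Sigma$, with $\partial_1$ the Euclidean unit normal and $\partial_2,\dots,\partial_n$ tangent, so that $f_\alpha=0$, $\nu_i=-f_i/|\nabla f|$, and $f_{\alpha\beta}=A_{\alpha\beta}|f_1|$, where $A$ is the second fundamental form of $\Sigma\hookrightarrow\mathbb{R}^n$ and Greek indices run over $2,\dots,n$. Expanding $P^{ijkl}$ through (\ref{P}) and (\ref{ginverse}) produces six groups of terms; inserting the coordinate relations and keeping careful track of which indices are forced to equal $1$, several pairs cancel (the contributions from $R^{j1}f_{j1}$ and $R^{1l}f_{1l}$ against part of the $R^{jl}f_{jl}$ contraction, and the $R^{11}f_{11}$ term against another part), leaving an expression involving only $R^{1\alpha1\beta}A_{\alpha\beta}$, $R^{11}H_1$, $R^{\alpha\beta}A_{\alpha\beta}$ and $RH_1$.

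Next I would trade the ambient Ricci and scalar curvatures for intrinsic quantities of $\Sigma$. By Lemma~\ref{curvature}, $R^{\alpha\beta\gamma\delta}=\frac{f_1^2}{1+f_1^2}\hat R^{\alpha\beta\gamma\delta}$, where $\hat R$ denotes the curvature of the induced metric $h$ on $\Sigma$; by the Gauss equation in $\mathbb{R}^n$, $\hat R^{\alpha\beta\gamma\delta}=A^{\alpha\gamma}A^{\beta\delta}-A^{\alpha\delta}A^{\beta\gamma}$. Contracting gives $\tilde F^{\alpha\beta}\triangleq R^{\alpha\gamma\beta\delta}g_{\gamma\delta}=\frac{|\nabla f|^2}{1+|\nabla f|^2}\hat R^{\alpha\beta}$ and $\tilde F\triangleq R^{\alpha\beta}g_{\alpha\beta}=\frac{|\nabla f|^2}{1+|\nabla f|^2}\hat R$. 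Substituting $R^{\alpha\beta}=\tilde F^{\alpha\beta}+R^{1\alpha1\beta}(1+f_1^2)$ and $R=\tilde F+2R^{11}(1+f_1^2)$ into the surviving expression makes the $R^{1\alpha1\beta}$ and $R^{11}$ terms cancel exactly, leaving $(\tilde F^{\alpha\beta}A_{\alpha\beta}-\frac12\tilde F H_1)\frac{f_1^2}{1+f_1^2}=(\hat R^{\alpha\beta}-\frac12\hat R h^{\alpha\beta})A_{\alpha\beta}\big(\frac{|\nabla f|^2}{1+|\nabla f|^2}\big)^2$.

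The remaining ingredient is the purely algebraic identity $-(\hat R^{\alpha\beta}-\frac12\hat R h^{\alpha\beta})A_{\alpha\beta}=3H_3$ for a hypersurface of $\mathbb{R}^n$, obtained by expanding $\hat R^{\alpha\beta}$ and $\hat R$ in $A$ via Gauss and rewriting the result through the elementary symmetric functions of the principal curvatures. This identity and the index bookkeeping in the six-term expansion are where essentially all of the work lies; neither is deep, but the bookkeeping---distinguishing the normal index $1$ from the tangential Greek indices at every step---is the main place an error could slip in. Finally, since $|\nabla f(x)|\to\infty$ as $x\to\Sigma$, the factor $\big(\frac{|\nabla f|^2}{1+|\nabla f|^2}\big)^2$ tends to $1$ on $\Sigma$, so $\int_\Sigma P^{ijkl}f_{jl}f_k\nu_i\,dS=-\int_\Sigma 3H_3\,dS$; combined with the bulk term this gives the stated formula for $m_2$.
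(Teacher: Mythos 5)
Your proposal is correct and follows essentially the same route as the paper: integrate Lemma \ref{divergenceform} over $\mathbb{R}^n\setminus\Omega$, reduce the boundary term to $\int_\Sigma P^{ijkl}f_{jl}f_k\nu_i\,dS$ by the antisymmetry of $P$, evaluate it pointwise in coordinates adapted to $\Sigma$ with the same cancellations, pass to intrinsic data via the Gauss equation, use $-(\hat R^{\alpha\beta}-\tfrac12\hat R h^{\alpha\beta})A_{\alpha\beta}=3H_3$, and let $|\nabla f|\to\infty$ kill the factor $\bigl(|\nabla f|^2/(1+|\nabla f|^2)\bigr)^2$. The only cosmetic difference is that you derive $R^{\alpha\beta\gamma\delta}=\frac{f_1^2}{1+f_1^2}\hat R^{\alpha\beta\gamma\delta}$ directly from Lemma \ref{curvature}, whereas the paper gets it by comparing the two Gauss equations through $\tilde A_{\alpha\beta}=A_{\alpha\beta}/\sqrt{1+f_1^2}$; these are equivalent.
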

\vspace{5mm}
 Let $\Omega_i$ be connected components of $\Omega,i=1,\dots,k$ and
let $\Sigma_i=\partial\Omega_i$ and assume that each $\Omega_i$ is
convex.
The rest to show the Penrose inequality in the graph case is the same as the one in \cite{Lam}, that to use the Aleksandrov-Fenchel inequality \cite{AF} .
\begin{lemm}\label{Alexsandrov}
 Assume $\Sigma$ is a convex hypersurface in $\mathbb{R}^n$, then
$$
\begin{array}{llll}
\displaystyle \frac{1}{2(n-1)(n-2)(n-3)\omega_{n-1}}\int_{\Sigma} 3H_3 dS&\displaystyle\geq \frac{1}{4}\bigg(\frac{\int_\Sigma RdS}{(n-1)(n-2)\omega_{n-1}}\bigg)^{\frac{n-4}{n-3}}\\
&\displaystyle\geq \frac{1}{4}\bigg(\frac{\int_\Sigma HdS}{(n-1)\omega_{n-1}}\bigg)^{\frac{n-4}{n-2}}\\
&\displaystyle \geq \frac{1}{4}\bigg(\frac{|\Sigma|}{\omega_{n-1}}\bigg)^{\frac{n-4}{n-1}}.
\end{array}
$$
\end{lemm}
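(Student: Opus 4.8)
The plan is to deduce the three inequalities of the lemma from the classical Aleksandrov--Fenchel inequalities for convex bodies \cite{AF}, after rephrasing them in terms of the curvature integrals $\int_\Sigma\sigma_k\,dS$, where $\sigma_k=\sigma_k(\kappa_1,\dots,\kappa_{n-1})=H_k$ denotes the $k$-th elementary symmetric function of the principal curvatures of the convex hypersurface $\Sigma\subset\mathbb{R}^n$. First I would record the two elementary facts that enter: $\sigma_0\equiv1$, so that $\int_\Sigma\sigma_0\,dS=|\Sigma|$ and $\sigma_1=H$; and, by the Gauss equation for a hypersurface of flat space, the scalar curvature of the metric $h$ induced on $\Sigma$ equals $R=H_1^2-|A|^2=2\sigma_2$, hence $\int_\Sigma R\,dS=2\int_\Sigma\sigma_2\,dS$. (The identity $-(\hat R^{\alpha\beta}-\frac12\hat R h^{\alpha\beta})A_{\alpha\beta}=3H_3$ used earlier is not needed here, only $R=2\sigma_2$.)

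Next I would invoke the geometric form of the Aleksandrov--Fenchel inequalities: for a closed convex hypersurface $\Sigma\subset\mathbb{R}^n$, the quermassintegrals of the enclosed convex body, which up to a fixed dimensional constant equal $\binom{n-1}{k}^{-1}\int_\Sigma\sigma_k\,dS$, satisfy that $\bigl(\binom{n-1}{k}^{-1}\omega_{n-1}^{-1}\int_\Sigma\sigma_k\,dS\bigr)^{1/(n-1-k)}$ is nondecreasing in $k$ for $0\le k\le n-1$, with equality between two of these values only when $\Sigma$ is a round sphere. Equivalently, for $0\le k<l\le n-1$ the quantity with index $l$ dominates the one with index $k$.

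The three inequalities of the lemma then correspond to the cases $(k,l)=(2,3),(1,2),(0,1)$. In each case I would raise the inequality to the power $n-4>0$ (this is where the hypothesis $n\ge5$ enters), substitute $\binom{n-1}{1}=n-1$, $\binom{n-1}{2}=\frac{(n-1)(n-2)}{2}$, $\binom{n-1}{3}=\frac{(n-1)(n-2)(n-3)}{6}$ and $\int_\Sigma\sigma_2\,dS=\frac12\int_\Sigma R\,dS$, and finally multiply through by $\frac14$. From $(k,l)=(2,3)$ one arrives at
\[
\frac{6\int_\Sigma H_3\,dS}{(n-1)(n-2)(n-3)\,\omega_{n-1}}\ \ge\ \left(\frac{\int_\Sigma R\,dS}{(n-1)(n-2)\,\omega_{n-1}}\right)^{\frac{n-4}{n-3}},
\]
and multiplying by $\frac14$ is the first line of the lemma because $\frac64=\frac32$. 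The cases $(1,2)$ and $(0,1)$, together with the exponent identities $\frac{n-3}{n-2}\cdot\frac{n-4}{n-3}=\frac{n-4}{n-2}$ and $\frac{n-2}{n-1}\cdot\frac{n-4}{n-2}=\frac{n-4}{n-1}$, give the second and third inequalities; chaining the three produces the displayed string.

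The only substantive ingredient is the Aleksandrov--Fenchel inequality itself, and it is precisely this that forces the convexity hypothesis on $\Sigma$; everything else is arithmetic with binomial coefficients and exponents, the only delicate point being to match the normalizations so that the chain telescopes. I would also record that every inequality above is an equality exactly when $\Sigma$ is a round sphere, which is the case for the horizon of the metric (\ref{metric}) and is responsible for the optimality asserted in Theorem \ref{mainthm3}.
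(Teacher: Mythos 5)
Your proof is correct and follows essentially the same route as the paper: both reduce the chain to the Aleksandrov--Fenchel inequalities for the normalized curvature integrals $\binom{n-1}{k}^{-1}\omega_{n-1}^{-1}\int_\Sigma H_k\,dS$ and then use the Gauss equation $R=2H_2$ to rewrite the middle term. Your version merely makes explicit the binomial-coefficient and exponent bookkeeping that the paper leaves implicit.
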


\begin{proof}
By  the Aleksandrov-Fenchel inequality, we infer that
 $$
 \begin{array}{llll}
\displaystyle\frac{1}{2(n-1)(n-2)(n-3)\omega_{n-1}}\int_{\Sigma} 3H_3 dS&\displaystyle\geq \frac{1}{4}\bigg(\frac{\int_\Sigma 2H_2dS}{(n-1)(n-2)\omega_{n-1}}\bigg)^{\frac{n-4}{n-3}}\\
&\displaystyle\geq \frac{1}{4}\bigg(\frac{\int_\Sigma H_1dS}{(n-1)\omega_{n-1}}\bigg)^{\frac{n-4}{n-2}}\\
&\displaystyle\geq \frac{1}{4}\bigg(\frac{|\Sigma|}{\omega_{n-1}}\bigg)^{\frac{n-4}{n-1}}.
\end{array}
$$
 On the other hand, it follows from the Gauss equation
 $2H_2=R$. Hence the desired result yields.
\end{proof}

Now we are ready to  finish  the proof of the  Penrose inequality.

\

\noindent {\it Proof of Theorem \ref{mainthm3}}.
In view of  Proposition \ref{penrose} together with  Lemma \ref{Alexsandrov},  we have showed the first part of Theorem \ref{mainthm3}.
It remains to check that  metric (\ref{metric}) (See also  Example \ref{Schwarzschild} below) attains the equality in the Penrose-type inequality. First, it follows from the calculation in Section 6 below
that the Gauss-Bonnet curvature $L_2$ with respect to  metric (\ref{metric}) is equal to $0$.
Its  horizon is $S_{\rho_0}=\{(\rho,\theta):\rho_0^{\frac n2-2}=2m\}$ which implies the right hand side of the Penrose-type inequality is
\begin{eqnarray*}
RHS&=&\frac14\bigg(\frac{\omega_{n-1}{\rho_0}^{n-1}}{\omega_{n-1}}\bigg)^{\frac{n-4}{n-1}}\\
&=&\frac 14{\rho_0}^{n-4}=\frac 14(2m)^2\\
&=&m^2,
\end{eqnarray*}
which is equal to the GBC mass $m_2$. See its computation in Section 6 below.
\qed

\begin {rema} By the work of Guan-Li \cite{GL} one can reduce the assumption of convexity of $\Sigma$ to the assumptions that $\Sigma$
is star-shaped, $H_1>0$, $R>0$ and $H_3$ is non-negative. See also the related work of \cite{CW}.

\end{rema}
\section{Generalized Schwarzschild metrics}
\begin{exam}\label{Schwarzschild}
$(\cM^n=I\times {\mathbb S}^{n-1},g)$ with coordinates $(\rho,\theta),$  general Schwarzschild metric is given by
$$g^k_{\rm Sch}=(1-\frac{2m}{\rho^{\frac nk-2}})^{-1}d\rho^2+\rho^2d\Theta^2,$$
where $d\Theta^2$ is the round metric on ${\mathbb S}^{n-1},$ $m\in \mathbb{R}$ is the `total mass' of corresponding black hole solution in the  Lovelock gravity \cite{Lo}.
When $k=1$ we recover the Schwarzschild solution of the Einstein gravity.\\
\end{exam}
Motivated by the Schwarzschild solution, the above metric can be also written as conformally flat one  which is more convenient for computation.
One can check that the corresponding  transformation
$$(1-\frac{2m}{{\rho}^{\frac nk-2}})^{-1}d\rho^2+\rho^2d\Theta^2=(1+\frac{m}{2r^{\frac nk-2}})^{\frac{4k}{n-2k}}(dr^2+r^2d\Theta^2).$$

For our purpose, in this paper we focus on  the case $k=2$, namely,
$$g^{(2)}_{\rm Sch}=(1-\frac{2m}{\rho^{\frac n2-2}})^{-1}d\rho^2+\rho^2d\Theta^2=(1+\frac{m}{2r^{\frac n2-2}})^{\frac{8}{n-4}}\delta,$$
where $\delta$ is the standard Euclidean metric, which was given in the introduction.

Next, we will study the correspondence between $m$ and the Gauss-Bonnet mass $m_2.$\\
Recall
\begin{eqnarray}\label{m2calculate}
m_2&=&\lim_{r\rightarrow\infty}\frac {1}{2(n-1)(n-2)(n-3)\omega_{n-1}}\int_{S_r}P^{ijkl}\partial_l g_{jk}\nu_idS.\nonumber
\end{eqnarray}
For the simplicity of notation, we introduce the notation of the Kulkarmi-Nomizu product denoted by $\odot$. More precisely,
\begin{eqnarray*}
&&(A\odot B)(X,Y,Z,W)\\
&:=&A(X,Z)B(Y,W)-A(Y,Z)B(X,W)-A(X,W)B(Y,Z)+A(Y,W)B(X,Z).
\end{eqnarray*}
Then we have the compressed expression
$$P=Rm-Ric\odot g+\frac 14 R(g\odot g).$$
Suppose $g=e^{-2u}\delta,$ a direct computation gives
\begin{eqnarray}\label{conformalcuv.}
Rm&=&e^{-2u}(\nabla_{\delta}^2u+du\otimes du-\frac 12|\nabla_{\delta}u|^2\delta)\odot \delta\nonumber\\
Ric&=&(n-2)\left(\nabla_{\delta}^2u+\frac{1}{n-2}(\Delta_{\delta}u)\delta+du\otimes du-|\nabla_{\delta}u|^2\delta \right)\\
R&=&e^{2u}\left(2(n-1)\Delta_{\delta}u-(n-1)(n-2)|\nabla_{\delta}u|^2\right).\nonumber
\end{eqnarray}
Then we have
\begin{eqnarray}\label{localP}
P&=& Rm-Ric\odot g+\frac 14 R(g\odot g)\nonumber\\
&=&Rm-e^{-2u}Ric\odot\delta+\frac 14e^{-4u}R(\delta\odot\delta)\nonumber\\
&=&(n-3)e^{-2u}\left(-\nabla_{\delta}^2 u+\frac 12(\Delta_{\delta}u)\delta-du\otimes du-\frac{n-4}{4}|\nabla_{\delta}u|^2\delta\right)\odot\delta.
\end{eqnarray}
which implies
\begin{eqnarray}\label{integralterm}
P^{ijkl}&=&(n\!-\!3)e^{6u}\big[-u_{ik}\delta_{jl}-u_{jl}\delta_{ik}+u_{il}\delta_{jk}+u_{jk}\delta_{il}+(u_{ss}\!-\!\frac{n-4}{2}u_s^2)(\delta_{ik}\delta_{jl}-\delta_{il}\delta_{jk})\nonumber\\
&&\qquad\qquad\quad-u_i u_k\delta_{jl}-u_j u_l\delta_{ik}+u_i u_l\delta_{jk}+u_j u_k\delta_{il}\big].%\nonumber
\end{eqnarray}
Since $g=e^{-2u}\delta,$ we have
$$
\int_{S_r}P^{ijkl}\partial_l g_{jk}\nu_idS
=\int_{S_r}P^{ijkl}(\partial_l e^{-2u})\delta_{jk}\nu_idS
=\int_{S_r}-2e^{-2u}P^{ijjl}u_l \nu_id S.
$$
 From (\ref{integralterm}), we have
\begin{eqnarray*}
&&-2e^{-2u}P^{ijjl}u_l\\
&=&-2(n-3)e^{4u}u_l[-u_{il}-u_{il}+nu_{il}+u_{ss}\delta_{il}+(u_{ss}-\frac{n-4}{2}u_s^2)(\delta_{il}-n\delta_{il})\\
&&\qquad\qquad\qquad\quad\;-u_i u_l-u_i u_l+nu_i u_l+u_s^2\delta_{il}]\\
&=&-2(n-2)(n-3)e^{4u}u_l[u_{il}-u_{ss}\delta_{il}+u_i u_l+\frac{n-3}{2}u_s^2\delta_{il}].
\end{eqnarray*}

We now consider a special case that $(\cM^n,g)$ is asymptotically flat  and conformally flat with a smooth spherically symmetric function, namely,
$g=e^{-2u(r)}\delta.$
Denoting the radial derivative of $u$ by $u_r\triangleq\frac{\partial u}{\partial r}$,  we have
\begin{eqnarray}\label{ui}
u_i&=&u_r\frac{x_i}{r},\\
u_{ij}&=&u_{rr}\frac{x_i x_j}{r^2}+u_r(\frac{\delta_{ij}}{r}-\frac{x_i x_j}{r^3}),\label{uij}
\end{eqnarray}
which yields
\begin{equation}\label{uii}
u_{ii}=u_{rr}+\frac{n-1}{r}u_r.
\end{equation}
It follows from (\ref{ui}), (\ref{uij}) and (\ref{uii}) that
\begin{eqnarray*}
&&\int_{S_r}-2e^{-2u}P^{ijjl}u_l\nu_i d S\\
&=&\int_{S_r}2(n-2)(n-3)e^{4u}u_l(-u_{il}+u_{ss}\delta_{il}-u_i u_l-\frac{n-3}{2}u_s^2\delta_{il})\frac{x_i}{r}d S\\
&=&\int_{S_r}2(n-2)(n-3)e^{4u}u_r\frac{x_l x_i}{r^2}\big[-u_{rr}\frac{x_i x_l}{r^2}-u_r(\frac{\delta_{il}}{r}\!-\!\frac{x_i x_l}{r^3})+(u_{rr}\!+\!\frac{n\!-\!1}{r}u_r)\delta_{il}\\
&&\qquad\qquad\qquad\qquad\qquad\qquad\qquad\;-\frac{x_i x_l}{r^2}u_r^2-\frac{n-3}{2}u_r^2\delta_{il}\big]d S\\
&=&\int_{S_r}2(n-2)(n-3)u_re^{4u}\big[-u_{rr}-\frac{u_r}{r}+\frac{u_r}{r}+u_{rr}+\frac{n-1}{r}u_r-u_r^2-\frac{n-3}{2}u_r^2\big]d S\\
&=&\int_{S_r}2(n-1)(n-2)(n-3)e^{4u}\big[\frac{u_r^2}{r}-\frac 12u_r^3\big]d S.
\end{eqnarray*}
In particular, for metric (\ref{metric}), we have
$$e^{-2u}=(1+\frac{m}{2r^{\frac n2-2}})^{\frac{8}{n-4}},$$
namely,
$$u=-\frac{4}{n-4} {\log}(1+\frac{m}{2r^{\frac n2-2}}),$$
and
\begin{eqnarray*}
u_r&=&-\frac{4}{n-4}\frac{1}{1+\frac{m}{2r^{\frac n2-2}}} \frac{m}{2}(2-\frac n2)r^{1-\frac n2}\\
&=&\frac{m}{1+\frac{m}{2r^{\frac n2-2}}}r^{1-\frac n2}.
\end{eqnarray*}
Therefore, we have
\begin{eqnarray*}
&&\int_{S_r}2(n-1)(n-2)(n-3)e^{4u}\left(\frac{u_r^2}{r}-\frac12u_r^3\right)d S\\
&=&\int_{S_r}\!2(n\!-\!1)(n\!-\!2)(n\!-\!3)\left(1\!+\!\frac{m}{2r^{\frac n2\!-\!2}}\right)^{-\frac{16}{n\!-\!4}}\left(\frac{m^2}{(1\!+\!\frac{m}{2r^{\frac n2\!-\!2}})^2}r^{1-n}\!-\!\frac 12\frac{m^3}{(1\!+\!\frac{m}{2r^{\frac n2\!-\!2}})^3}r^{3\!-\!\frac 32n} \right)d S\\
&=&\int_{S_r}2(n-1)(n-2)(n-3)\left(1+\frac{m}{2r^{\frac n2-2}}\right)^{\frac{-2(n+4)}{n-4}} m^2 r^{1-n} d S\\
&&-\int_{S_r}(n-1)(n-2)(n-3)\left(1+\frac{m}{2r^{\frac n2-2}}\right)^{\frac{-4-3n}{n-4}} m^3 r^{3-\frac 32n} d S.
\end{eqnarray*}
When $n\geq 5,$ the last term converges to  $0$ as $r\rightarrow\infty$.
Hence we have
\begin{eqnarray*}
&&\lim_{r\rightarrow\infty}\int_{S_r}\frac{1}{2(n-1)(n-2)(n-3)\omega_{n-1}}P^{ijkl}\partial_l g_{jk}\nu_id S\\
&=&\lim_{r\rightarrow\infty}\int_{S_r}\frac{1}{\omega_{n-1}}\left(1+\frac{m}{2r^{\frac n2-2}}\right)^{\frac{-2(n+4)}{n-4}} m^2 r^{1-n}d S\\
&=&m^2.
\end{eqnarray*}
Namely, the GBC mass $m_2$ of  metric (\ref{metric}) is exactly equal to $m^2$ as claimed.

One interesting byproduct of the above computation is the following positivity result.
\begin{prop}\label{spherical symmetric}
Suppose $(\cM^n,g), (n\geq 5)$ is asymptotically flat of decay order $\tau>\frac{n-4}{3}$
and $(\cM^n,g)$ is spherically symmetric i.e. $g=e^{-2u(r)}\delta$. Moreover, if $L_2$ is integrable on $(\cM^n,g)$,
then
$$m_2=\lim_{r\rightarrow\infty}\frac{1}{\omega_{n-1}}\int_{S_r}\frac{u_r^2}{r}dS\geq0.$$
\end{prop}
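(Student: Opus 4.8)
The plan is to leverage the explicit boundary computation already carried out above for a spherically symmetric conformally flat metric together with the mere \emph{existence} of the limit defining $m_2$, which is guaranteed by Theorem \ref{thm1} once $L_2$ is integrable. Since $g=e^{-2u(r)}\delta$ with $u$ depending only on $r$, the integrand of $m_2$ is a function of $r$ alone and the surface integral over $S_r$ is just its value times $|S_r|=\omega_{n-1}r^{n-1}$. Feeding the identity
$$\int_{S_r}P^{ijkl}\partial_l g_{jk}\,\nu_i\,dS=\int_{S_r}2(n-1)(n-2)(n-3)e^{4u}\Big(\frac{u_r^2}{r}-\frac12 u_r^3\Big)\,dS$$
and the value of $c_2(n)$ into Definition \ref{m2}, I would first record that
$$m_2=\lim_{r\to\infty}e^{4u(r)}\Big(1-\frac12\,r\,u_r(r)\Big)\,u_r(r)^2\,r^{n-2},$$
a limit which exists by Theorem \ref{thm1}.

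Next I would extract the decay consequences of asymptotic flatness. Writing $e^{-2u}\delta_{ij}=\delta_{ij}+\sigma_{ij}$, the bounds $|\sigma_{ij}|=O(r^{-\tau})$ and $r|\partial\sigma_{ij}|=O(r^{-\tau})$ give $e^{-2u}-1=O(r^{-\tau})$ and $r\,\partial_r(e^{-2u})=O(r^{-\tau})$, hence $u(r)\to0$ and $r\,u_r(r)\to0$ as $r\to\infty$. Therefore the factor $h(r):=e^{4u(r)}\big(1-\frac12\,r\,u_r(r)\big)$ tends to $1$; in particular $h(r)\ge\frac12>0$ for all large $r$.

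Finally I would transfer the limit. Since $\lim_{r\to\infty}h(r)\,u_r(r)^2r^{n-2}=m_2$ exists, $h(r)>0$ for large $r$ and $h(r)\to1$, the quotient $u_r(r)^2r^{n-2}=h(r)^{-1}\big(h(r)\,u_r(r)^2r^{n-2}\big)$ converges, with
$$\lim_{r\to\infty}u_r(r)^2\,r^{n-2}=\frac{\lim_{r\to\infty}h(r)\,u_r(r)^2r^{n-2}}{\lim_{r\to\infty}h(r)}=m_2.$$
As $u$ depends only on $r$ one has $\frac{1}{\omega_{n-1}}\int_{S_r}\frac{u_r^2}{r}\,dS=u_r(r)^2r^{n-2}$, which gives the asserted formula; and since the right-hand side is a limit of nonnegative quantities, $m_2\ge0$. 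I do not anticipate a genuine obstacle here; the only point that needs care is that the cubic term $\frac12u_r^3$ cannot be discarded by a crude decay estimate when $\tau$ is only slightly larger than $(n-4)/3$ (it contributes a term of size $O(r^{n-2-3\tau})$, which need not vanish), so rather than estimating each term separately one keeps it packaged inside the common factor $u_r^2r^{n-2}$ and exploits the already-established existence of the limit defining $m_2$. The verification of $u\to0$ and $r\,u_r\to0$ from the asymptotic-flatness bounds is routine.
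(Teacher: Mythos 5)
Your proof is correct and follows essentially the same route as the paper's: both start from the Section 6 identity expressing $\int_{S_r}P^{ijkl}\partial_l g_{jk}\nu_i\,dS$ as $2(n-1)(n-2)(n-3)\int_{S_r}e^{4u}\bigl(\tfrac{u_r^2}{r}-\tfrac12 u_r^3\bigr)\,dS$, invoke Theorem \ref{thm1} for the existence of the limit defining $m_2$, and use the decay $u=O(r^{-\tau})$, $u_r=O(r^{-1-\tau})$ coming from asymptotic flatness. The only point of divergence is how the cubic term is disposed of, and there your stated motivation contains a slip: its contribution is $\tfrac12 u_r^3 r^{n-1}=O(r^{\,n-4-3\tau})$, not $O(r^{\,n-2-3\tau})$, and since $3\tau>n-4$ this does tend to zero, so the ``crude decay estimate'' you dismiss as possibly failing is in fact exactly the paper's argument (phrased there as $u_r=o(r^{-(n-1)/3})$, whence $u_r^3 r^{n-1}=o(1)$). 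Your alternative --- packaging $e^{4u}\bigl(1-\tfrac12 ru_r\bigr)$ into a factor $h(r)\to1$ with $h>0$ eventually, and dividing the convergent product by it --- is nonetheless valid and yields the same conclusion, so nothing is broken; just be aware that the detour is unnecessary and its justification rests on a miscounted exponent.
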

\begin{proof}
By the above calculation, we have
$$m_2=\lim_{r\rightarrow\infty}\frac{1}{\omega_{n-1}}\int_{S_r}e^{4u}\left(\frac{u_r^2}{r}-\frac 12 u_r^3\right) dS.$$
We claim that under the assumption of the decay order, the limit of the second term vanishes as $r\rightarrow\infty.$
In fact, since $(\cM^n,g=e^{-2u(r)}\delta)$ is asymptotically flat of decay order $\tau$, we have $u_r=O(r^{-1-\tau})$ and  $u=O(r^{-\tau})$.
Combining with the condition of decay order $\tau>\frac{n-4}{3}$, we thus get
$$u_r=o(r^{-\frac{n-1}{3}}),$$ which yields the limit of the second integral vanishes as $r\rightarrow\infty.$ Moreover, $e^{4u}=1+o(1)$. Hence
the desired result follows from the claim.
\end{proof}

\begin{rema}
By this Proposition and  the previous positive mass theorem for graphs, there are no spherically symmetric asymptotically flat smooth functions on ${\mathbb R}^n$
whose graphs have negative  Gauss-Bonnet curvature $L_2$ everywhere.
\end{rema}

\begin{rema}
Given  an asymptotically flat and  spherically symmetric manifold $(\cM^n,g)$,  a direct computation gives the $ADM$ mass
\begin{eqnarray*}
m_{ADM}&\triangleq&\lim_{r\rightarrow\infty}\frac{1}{2(n-1)\omega_{n-1}}\int_{S_r}(g_{ij,i}-g_{ii,j}) \nu_j d S\\
&=&\lim_{r\rightarrow\infty}\frac{1}{\omega_{n-1}}\int_{S_r}e^{-2u}u_r d S.
\end{eqnarray*}
Thus if the manifold is asymptotically flat of decay order $\tau>\frac {n-2}{2}$ and $(\cM^n,g)$ is spherically symmetric, that is, $g=e^{-2u(r)}\delta,$ and if the scalar curvature $R$ is integrable, unlike our case
the ADM mass is not always nonnegative.
\end{rema}

\begin{rema}\label{Schgraph}
It is well-known that the Schwarzschild metrics have zero scalar curvature and in view of analogy, the Gauss-Bonnet curvature  $L_2$ with respect to metric (\ref{metric}) is equal to $0$. One can check from the above calculation. Moreover, the metric in Example \ref{Schwarzschild} can be realized as a graph with the induced metric from the Euclidean space ${\mathbb R} ^{n+1}$. For example, when $n=5$, metric (\ref{metric}), in its equivalent  form,
 $$(\cM^5,g)=\big({\mathbb R}^5\setminus\{0\},\big(1+\frac {{m}}{2r^{\frac{1}2}}\big)^{8}g_{{\mathbb R}^5}\big)$$ can be isometrically embedded as a rotating parabola  in $\{(x_1,x_2,x_3,x_4,x_5,w)\subset{\mathbb R}^6\}$. The outer end of metric (\ref{metric}) containing the infinity
is the graph of the spherically symmetric function $f:{\mathbb R}^5\setminus B_{4m^2}(0)\rightarrow{\mathbb R}$ given by $f(r)=2r^{\frac 12}\sqrt{8m(r^{\frac 12}-2m)}-\frac{1}{6m}[8m(r^{\frac 12}-2m)]^{\frac 32},$ where $r=|(x_1,x_2,x_3,x_4,x_5)|.$
\end{rema}

\begin{lemm}\label{sigma}
Assume  $(\cM^n,g) (n\geq 5)$ is a $n$-dimensional submanifold in ${\mathbb R} ^{n+1}$.
Then
$$L_2=24H_4:=24\sum_{i<j<k<l}\lambda_i\lambda_j\lambda_k\lambda_l,$$
where $(\lambda_1,\dots,\lambda_n)$ is the set of eigenvalues of the second fundamental form $A$.
\end{lemm}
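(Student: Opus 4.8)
The plan is to diagonalize the second fundamental form and feed the Gauss equation into the Kronecker-delta presentation of the Gauss--Bonnet term. Fix a point $p\in\cM^n$ and choose an orthonormal basis $\{e_1,\dots,e_n\}$ of $T_p\cM^n$ consisting of eigenvectors of the second fundamental form, so that $A_{ij}=\lambda_i\d_{ij}$ (no summation). Since $\cM^n\subset\R^{n+1}$ is a hypersurface of the flat ambient space, the Gauss equation gives $R_{ijkl}=A_{ik}A_{jl}-A_{il}A_{jk}$, which in this basis reads
$$R_{ijkl}=\lambda_i\lambda_j\,(\d_{ik}\d_{jl}-\d_{il}\d_{jk})\qquad(\mbox{no summation over }i,j).$$
All indices may be raised and lowered freely in this frame, since $g_{ij}=\d_{ij}$ at $p$, and $L_2$ being a scalar it suffices to evaluate it here.

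Next I would substitute this into the Lovelock expression for $k=2$ recalled in Section \ref{sec5},
$$L_2=\frac14\,\d^{i_1i_2i_3i_4}_{j_1j_2j_3j_4}\,{R_{i_1i_2}}^{j_1j_2}\,{R_{i_3i_4}}^{j_3j_4},$$
and use that the generalized Kronecker delta is totally antisymmetric in its upper and in its lower indices. Each curvature factor contributes $\lambda_{i_a}\lambda_{i_b}(\d_{i_aj_a}\d_{i_bj_b}-\d_{i_aj_b}\d_{i_bj_a})$, which is already antisymmetric in the pair $(j_a,j_b)$; contracted against $\d^{i_1i_2i_3i_4}_{j_1j_2j_3j_4}$ the two terms therefore contribute equally, so each pair may be replaced by $2\,\d_{i_aj_a}\d_{i_bj_b}$. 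The resulting two factors of $2$ cancel the prefactor $\frac14$, and after carrying out the $j$-summations one is left with
$$L_2=\sum_{i_1,i_2,i_3,i_4}\d^{i_1i_2i_3i_4}_{i_1i_2i_3i_4}\,\lambda_{i_1}\lambda_{i_2}\lambda_{i_3}\lambda_{i_4},$$
where now $\d^{i_1i_2i_3i_4}_{i_1i_2i_3i_4}$ (no summation) equals $1$ when $i_1,i_2,i_3,i_4$ are pairwise distinct and $0$ otherwise, since its defining determinant has a repeated row as soon as two indices coincide.

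Finally, the surviving sum runs over ordered $4$-tuples of pairwise distinct indices, and each unordered quadruple $\{i_1<i_2<i_3<i_4\}$ is counted $4!=24$ times, so $L_2=24\sum_{i_1<i_2<i_3<i_4}\lambda_{i_1}\lambda_{i_2}\lambda_{i_3}\lambda_{i_4}=24H_4$, which is the claim. The argument is essentially combinatorial; the only place demanding care is the bookkeeping of constants -- the factor $2$ produced by each antisymmetrized pair against the prefactor $\frac14$, and the evaluation of the generalized Kronecker delta on the diagonal. As a consistency check one may test the identity on the round unit sphere $S^n$, where all $\lambda_i=1$ and both sides equal $n(n-1)(n-2)(n-3)$. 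A more pedestrian alternative would be to expand $L_2=\|Rm\|^2-4\|Ric\|^2+R^2$ directly, using $\|Rm\|^2=4\sum_{i<j}\lambda_i^2\lambda_j^2$, $\|Ric\|^2=\sum_i\lambda_i^2(H_1-\lambda_i)^2$ and $R=H_1^2-\sum_i\lambda_i^2$, and then reassembling $H_4$ via Newton's identities; but the Kronecker-delta route above is cleaner and generalizes verbatim to $L_k$ and $H_{2k}$.
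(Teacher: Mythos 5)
Your argument is correct and follows the paper's own route: the paper simply invokes the Gauss equation $R^{ijkl}=A^{ik}A^{jl}-A^{il}A^{jk}$ and leaves the rest as a ``direct calculation,'' which is exactly the computation you carry out (diagonalizing $A$ and contracting against the generalized Kronecker delta, with the constants $\frac14\cdot2\cdot2=1$ and the $4!$ overcounting handled correctly). Your sphere consistency check $L_2=n(n-1)(n-2)(n-3)$ also verifies against $\|Rm\|^2-4\|Ric\|^2+R^2$, so nothing is missing.
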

\begin{proof}
We recall the Gauss equation
$$R^{ijkl}=A^{ik}A^{jl}-A^{il}A^{jk}.$$
Thus the desired result yields from a direct calculation.
\end{proof}

As a consequence, we obtain the following result.
\begin{prop}
Let $f:{\mathbb R}^n\to {\mathbb R}$ be a smooth radial function in Definition \ref{f}.   Then  the second fundamental form $A$ has $n-1$ eigenvalues $\frac{f_r}{r\sqrt{1+f_r^2}}$ and one eigenvalue $\frac{f_{rr}}{(\sqrt{1+f_r^2})^3}$. Hence
$$
L_2=24\left(\frac{(n-1)!}{(n-5)!\;4!\;}\frac{f_r^4}{r^4(1+f_r^2)^2}+\frac{(n-1)!}{(n-4)!\;3!\;}\frac{f_r^3f_{rr}}{r^3(1+f_r^2)^3}\right),
$$
so that if $L_2$ is integrable, then
$$
m_2=\lim_{r\to+\infty}\frac{1}{w_{n-1}}\int_{S_r}\frac{r^{n-4}f_r^4(r)}{4%(1+f'^2(r))^2
}\ge 0.
$$
\end{prop}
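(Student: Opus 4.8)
The plan is to read off the extrinsic geometry of the radial graph explicitly, feed it into the hypersurface identity $L_2=24H_4$ of Lemma \ref{sigma}, and then evaluate $m_2$ through Theorem \ref{mainthm2} by recognising the resulting radial integral as a telescoping integration by parts.

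\emph{Step 1: the principal curvatures.} For the graph $(\mathcal M^n,g)=(\mathbb R^n,\delta+df\otimes df)$, viewed as the hypersurface $\{(x,f(x))\}\subset\mathbb R^{n+1}$ with upward unit normal, one has $g_{ij}=\delta_{ij}+f_if_j$ and $A_{ij}=f_{ij}/\sqrt{1+|\nabla f|^2}$. When $f=f(r)$ is radial I would use $f_i=f_r\,x_i/r$ and $f_{ij}=f_{rr}\,x_ix_j/r^2+f_r(\delta_{ij}/r-x_ix_j/r^3)$. The splitting $T_x\mathbb R^n=\mathbb R\cdot(x/r)\oplus T_xS_r$ is orthogonal for both $\delta$ and $g$ (since $\nabla f$ is radial), and both $A$ and $g$ are block diagonal with respect to it, so the shape operator $S=g^{-1}A$ splits accordingly. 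On $T_xS_r$ the metric is $\delta$ and $f_{ij}$ restricts to $(f_r/r)\,\delta$, giving the eigenvalue $f_r/(r\sqrt{1+f_r^2})$ with multiplicity $n-1$; in the radial direction $A(x/r,x/r)=f_{rr}/\sqrt{1+f_r^2}$ while $g(x/r,x/r)=1+f_r^2$, giving the single eigenvalue $f_{rr}/(1+f_r^2)^{3/2}$. (The one thing to be careful about is to divide the second fundamental form by the induced metric $g$, not by $\delta$.) Substituting $\lambda_1=\dots=\lambda_{n-1}=f_r/(r\sqrt{1+f_r^2})$ and $\lambda_n=f_{rr}/(1+f_r^2)^{3/2}$ into $L_2=24H_4=24\big(\binom{n-1}{4}\lambda_1^4+\binom{n-1}{3}\lambda_1^3\lambda_n\big)$ yields the stated formula for $L_2$.

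\emph{Step 2: the mass.} By Theorem \ref{mainthm2} together with $dV_g=\sqrt{1+|\nabla f|^2}\,dV_\delta$,
\[
m_2=\frac{c_2(n)}{2}\int_{\mathbb R^n}L_2\,dV_\delta=\frac{c_2(n)\,\omega_{n-1}}{2}\int_0^\infty L_2(r)\,r^{n-1}\,dr .
\]
The cross term is an exact derivative, $\dfrac{f_r^3f_{rr}}{(1+f_r^2)^3}=\dfrac14\dfrac{d}{dr}\!\Big(\dfrac{f_r^4}{(1+f_r^2)^2}\Big)$, so integrating by parts over $[0,R]$ — the endpoint $r=0$ contributes nothing since $f$ smooth and radial forces $f_r(0)=0$ and $n-4\ge1$ — produces an interior term $-\tfrac{n-4}{4}\int_0^R\tfrac{f_r^4 r^{n-5}}{(1+f_r^2)^2}\,dr$, which cancels the $\binom{n-1}{4}$ term exactly because $\tfrac{n-4}{4}\binom{n-1}{3}=\binom{n-1}{4}$. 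This leaves the identity, valid for every $R$,
\[
\int_{B_R}L_2\,dV_\delta=6\binom{n-1}{3}\omega_{n-1}\,\frac{f_r(R)^4 R^{n-4}}{(1+f_r(R)^2)^2}.
\]
Since $L_2$ is integrable on $(\mathcal M^n,g)$ and $dV_g\ge dV_\delta$, the left side converges as $R\to\infty$, hence so does the right side; plugging in $c_2(n)$ from \eqref{constant} and using $f_r\to0$ at infinity gives $m_2=\tfrac14\lim_{r\to\infty}\tfrac{r^{n-4}f_r(r)^4}{(1+f_r(r)^2)^2}=\tfrac14\lim_{r\to\infty}r^{n-4}f_r(r)^4\ge0$.

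Both steps are computational, and the binomial telescoping is forced by the known answer, so there is no deep obstacle. The one point I would flag explicitly is that the decay hypothesis $\tau>(n-4)/3$ by itself only yields $r^{n-4}f_r^4=O(r^{\,n-4-2\tau})$, with exponent possibly positive; thus the existence of the limit defining $m_2$ in this form is not automatic, and it is precisely the integrability of $L_2$ — which the exact identity above ties directly to that boundary value — that makes it hold. Nonnegativity is then immediate, since the surviving boundary quantity is a fourth power.
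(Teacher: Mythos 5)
Your proof is correct and fills in exactly the computation the paper leaves implicit: principal curvatures of the radial graph, substitution into $L_2=24H_4$ from Lemma \ref{sigma}, and evaluation of $m_2$ via Theorem \ref{mainthm2} with the exact-derivative cancellation $\tfrac{n-4}{4}\binom{n-1}{3}=\binom{n-1}{4}$. Your final expression $m_2=\tfrac14\lim_{r\to\infty}r^{n-4}f_r^4$ is the correct reading of the proposition's displayed formula (whose $\tfrac{1}{\omega_{n-1}}\int_{S_r}$, taken literally with the area element, carries a spurious factor of $r^{n-1}$), and your observation that integrability of $L_2$ rather than the decay hypothesis alone is what forces the boundary limit to exist is a worthwhile clarification.
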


\section{Applications to the ADM mass}

In this section, we  provide an interesting application of our study presented above.

As mentioned in the introduction, our paper is motivated by the Einstien-Gauss-Bonnet theory, in which  one studies  the following action
\begin{equation}\label{xeq0.1}
R+\Lambda+\alpha L_2(g)
\end{equation}
on $n$-dimensional manifolds with $n\geq 4$.
%Here, $R$ is the scalar curvature, $\Lambda$ the cosmological constant, $\alpha\in\R$ a constant and $L_2(g)$ the second Gauss-Bonnet curvature,
For this theory, there is a mass introduced by Deser-Tekin \cite{DT1, DT2}. The precise definition is given in the following

\begin{defi}[Einstein-Gauss-Bonnet Mass]\label{GBC} Let $n\ge 4$.
Suppose $(\cM^n,g)$ is an asymptotically flat manifold of decay order
\begin{equation*}
\tau>\frac{n-2}{2},
\end{equation*}
 and $R+\alpha L_2$  is integrable on $(\cM^n,g)$. We define the Einstein-Gauss-Bonnet mass-energy by
\begin{equation}\label{EGB_mass}
m_{EGB}= \frac 1{2(n-1)\omega_{n-1}} \lim_{r\rightarrow\infty}\int_{S_r}\{(g_{ij,j}-g_{jj,i})+2\alpha P^{ijkl}g_{jk,l}\} \nu_i dS,
\end{equation}
where $\nu$ is the outward unit normal vector to $S_r$, $dS$ is the area element of $S_r$ and the tensor $P$ is defined as in (\ref{P}).
\end{defi}

As for the GBC mass $m_{GBC}$, one can also show that this Einstein-Gauss-Bonnet mass  $m_{EGB}$ is a well-defined  geometric invariant, provided that
$R+\alpha L_2$ is integrable in $(\cM^n,g)$ and the decay satisfies
\begin{equation} \label{tau_fast}
\tau>\frac{n-2}{2}.
\end{equation}
This decay condition \eqref{tau_fast} is necessary for the well-definedness of the first part in \eqref{EGB_mass}. However, as we already proved,
this decay condition makes the second part in \eqref{EGB_mass} vanishing. This simple observation implies that
\beq\label{id}
m_{EGB}=m_{ADM}.
\eeq
Namely, in the Einstein-Gauss-Bonnet theory we obtain a mass which is the same as the ADM mass. This triviality, together with our work, implies a
new positive mass theorem for the ADM mass.

\begin{theo}[Positive Mass Theorem]\label{PMT}
{
Let $(\cM^n,g)=(\mathbb{R}^n,\delta+df\otimes df)$ be the graph of a smooth asymptotically flat function $f:\mathbb{R}^n\rightarrow \mathbb{R}$ such that
decay order $\tau>\frac {n-2} 2$ and $R+\alpha L_2$  is integrable on $(\cM^n,g)$.
  %Let $L_2\triangleq R_{ijkl}R^{ijkl}-4R_{ij}R^{ij}+R^2$ be the Gauss-Bonnet curvature with respect to $g$.
Then we have \begin{equation}\label{eq2.1}
m_{ADM}=m_{EGB}=\frac {1}{2(n-1)\omega_{n-1}}\int_{\cM^n}\bigg(R+\alpha L_2\bigg)\frac{1}{\sqrt{1+|\nabla_{\delta}f|^2}}dV_g,\end{equation}
where $\alpha$ is a constant. In particular,
\begin{equation}\label{PC}
R+\alpha L_2\geq 0,
\end{equation}
 yields $$m_{ADM}\geq 0.$$
}
\end{theo}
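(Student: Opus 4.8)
The plan is to deduce everything from three facts already in hand: the coincidence $m_{EGB}=m_{ADM}$ recorded in \eqref{id}, which holds because the fast decay $\tau>\frac{n-2}{2}$ forces the surface integral of $P^{ijkl}g_{jk,l}$ at infinity to vanish; the exact divergence formula $\partial_i(P^{ijkl}\partial_l g_{jk})=\frac12 L_2$ of Lemma \ref{divergenceform}; and Lam's divergence identity \cite{Lam} for the scalar curvature of a graph.

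First I would unwind the Deser--Tekin mass \eqref{EGB_mass} for the graph $(\cM^n,g)=(\mathbb R^n,\delta+df\otimes df)$ by converting each surface integral over $S_r$ into a volume integral over the Euclidean ball $B_r$ via the divergence theorem in $(\mathbb R^n,\delta)$ and then letting $r\to\infty$. Two integrability checks legitimize this. The term $2\alpha P^{ijkl}g_{jk,l}$ has Euclidean divergence $\alpha L_2$ by Lemma \ref{divergenceform}, and from Lemma \ref{curvature} one has $L_2=O(r^{-4-2\tau})$, so $L_2$ is integrable since $\tau>\frac{n-4}{2}$; for the first term, Lam's identity exhibits $\partial_j(g_{ij,i}-g_{ii,j})$ (modulo the quadratic Christoffel terms, which for a graph assemble exactly) as a quantity whose integral over $\mathbb R^n$ equals $\int_{\mathbb R^n}R\,dx$, and $R$ is integrable because $R=(R+\alpha L_2)-\alpha L_2$ is a difference of $L^1$ functions. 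With these limits in place, and using $dV_g=\sqrt{1+|\nabla f|^2}\,dV_\delta$ exactly as in the proof of Theorem \ref{mainthm2}, the first term contributes $\frac{1}{2(n-1)\omega_{n-1}}\int_{\cM^n}\frac{R}{\sqrt{1+|\nabla f|^2}}dV_g$ and the second contributes $\frac{2\alpha}{2(n-1)\omega_{n-1}}\cdot\frac12\int_{\mathbb R^n}L_2\,dx=\frac{\alpha}{2(n-1)\omega_{n-1}}\int_{\cM^n}\frac{L_2}{\sqrt{1+|\nabla f|^2}}dV_g$.

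Adding the two contributions gives $m_{EGB}=\frac{1}{2(n-1)\omega_{n-1}}\int_{\cM^n}\frac{R+\alpha L_2}{\sqrt{1+|\nabla f|^2}}dV_g$, and \eqref{id} turns this into \eqref{eq2.1}. This is consistent with \eqref{id} read the other way: the $\alpha L_2$ summand contributes nothing, because $\int_{\mathbb R^n}L_2\,dx=2\lim_{r\to\infty}\int_{S_r}P^{ijkl}g_{jk,l}\nu_i\,dS=0$ under the fast decay. The positivity is then immediate: if $R+\alpha L_2\ge 0$ pointwise on $\cM^n$, the integrand $(R+\alpha L_2)/\sqrt{1+|\nabla f|^2}$ is nonnegative and $dV_g>0$, hence $m_{ADM}=m_{EGB}\ge 0$.

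The only step that is not routine bookkeeping is the appeal to Lam's divergence identity for $R$ on a graph, which we take as given \cite{Lam}. The remaining care lies in matching the normalizing constants so that the coefficient $2\alpha$ in \eqref{EGB_mass} ends up as $\alpha L_2$ in \eqref{eq2.1}, and in the decay accounting (via Definition \ref{f}, which gives $f_{ij}=O(r^{-1-\tau/2})$ and hence $Rm,\ Ric,\ R=O(r^{-2-\tau})$ and $P^{ijkl}g_{jk,l}=O(r^{-3-2\tau})$) that makes every boundary term at infinity vanish and legitimizes the divergence theorem on all of $\mathbb R^n$. The one conceptual point worth emphasizing is that although the hypothesis $R+\alpha L_2\ge 0$ is strictly weaker than $R\ge 0$, it still suffices: the $L_2$-term integrates to zero, so the Deser--Tekin mass coincides with the ADM mass, yet the pointwise sign condition propagates to a sign on the total mass through the positive weight $1/\sqrt{1+|\nabla f|^2}$.
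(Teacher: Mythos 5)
Your proposal is correct and follows essentially the same route as the paper, which simply cites ``the above work'' (the identity $m_{EGB}=m_{ADM}$ from the fast decay, Lemma \ref{divergenceform}, and the divergence-theorem argument of Theorem \ref{mainthm2}) together with Lam's identity for the scalar-curvature term. Your extra bookkeeping --- the decay estimate $L_2=O(r^{-4-2\tau})$ guaranteeing its integrability and the observation that $R=(R+\alpha L_2)-\alpha L_2$ is then integrable --- fills in details the paper leaves implicit, but the argument is the same.
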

The proof follows from the above work and the work of Lam \cite{Lam} for the case $k=1$.
Hence, we have  a non-negative mass result under a different condition $R+\alpha L_2\geq 0$ other than the usual dominant condition
$R\ge 0$.

%Similarly we obtain the following Penrose inequality, which is optimal.

We also have a Penrose type inequality under  condition (\ref{PC}), at least for the graphs.

\begin{theo}\label{Penrose}
Let $\Omega$ be a bounded open set in $\mathbb{R}^n$ and $\Sigma=\partial\Omega.$ If $f:\mathbb{R}^n\setminus\Omega\rightarrow \mathbb{R}$ is a smooth
asymptotically flat function such that each connected component of $\Sigma$ is in a level set of $f$ and $|\nabla f(x)|\rightarrow\infty$ as
$x\rightarrow\Sigma$.
Assume further that  $(\cM^n,g)=(\mathbb{R}^n,\delta+df\otimes df)$ %be the graph of a smooth asymptotically flat function $f:\mathbb{R}^n\rightarrow \mathbb{R}$ such that
has decay order $\tau>\frac {n-2} 2$ and $R+\alpha L_2$  is integrable on $(\cM^n,g)$. Let $H_1$ and $H_3$ denote the mean curvature and the $3$-th mean curvature of $\Sigma$ induced by the Euclidean metric respectively. Then
\begin{eqnarray*}%\label{H}
m_{ADM}&=&\frac {1}{2(n-1)\omega_{n-1}}\int_{\cM^n}\bigg(R+\alpha L_2\bigg)\frac{1}{\sqrt{1+|\nabla_{\delta}f|^2}}dV_g\nonumber\\
&&+\frac {1}{2(n-1)\omega_{n-1}}\int_{\Sigma}\bigg(H_1+2\alpha\cdot 3H_3\bigg) dS.
\end{eqnarray*}
If we assume further that each connected component of  $\Sigma$ is convex and $\alpha \ge 0$, we have
\begin{eqnarray*}%\label{H3}
m_{ADM}&\geq&\frac {1}{2(n-1)\omega_{n-1}}\int_{\cM^n}\bigg(R+\alpha L_2\bigg)\frac{1}{\sqrt{1+|\nabla_{\delta}f|^2}}dV_g\nonumber\\
&&+\frac{1}{2}\bigg(\frac{|\Sigma|}{\omega_{n-1}}\bigg)^{\frac{n-2}{n-1}}+\frac{\alpha}{2}(n-2)(n-3)\bigg(\frac{|\Sigma|}{\omega_{n-1}}\bigg)^{\frac{n-4}{n-1}}.
\end{eqnarray*}
In particular,
$R+\alpha L_2\geq 0$ implies
\beq\label{new_P}
m_{ADM}\geq\frac{1}{2}\bigg(\frac{|\Sigma|}{\omega_{n-1}}\bigg)^{\frac{n-2}{n-1}}+\frac{\alpha}{2}(n-2)(n-3)\bigg(\frac{|\Sigma|}{\omega_{n-1}}\bigg)^{\frac{n-4}{n-1}}.\eeq
\end{theo}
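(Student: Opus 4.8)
\noindent\emph{Proof proposal for Theorem \ref{Penrose}.}
The plan is to reduce everything to two boundary computations that have already been carried out in the paper and then to feed the resulting boundary integrals into the Aleksandrov--Fenchel inequalities. Since the decay is now fast, $\tau>\frac{n-2}{2}$, the decay condition \eqref{tau_fast} makes the quadratic piece $P^{ijkl}g_{jk,l}$ of the Einstein--Gauss--Bonnet mass \eqref{EGB_mass} irrelevant at infinity, so $m_{ADM}=m_{EGB}$ as in \eqref{id}; hence it suffices to integrate the integrand of \eqref{EGB_mass} by parts over the exterior region $(\mathbb R^n\setminus\Omega,\delta)$, keeping track of the inner boundary term on $\Sigma$. (The invariance argument of Section 3 applies verbatim on the exterior region, since for $R$ large the annuli used there do not meet $\Sigma$.)

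First I would treat the linear part $g_{ij,j}-g_{jj,i}$. Here one is exactly in the graph setting of Lam \cite{Lam}: his divergence identity turns the bulk integral into a constant multiple of $\int_{\cM}R/\sqrt{1+|\nabla f|^2}\,dV_g$, while integrating by parts over $\mathbb R^n\setminus\Omega$ produces an inner boundary contribution on $\Sigma$; using that the Euclidean unit normal on $\Sigma$ fixed in Section 5 is $-\nabla f/|\nabla f|$ and that $f_{\alpha\beta}=A_{\alpha\beta}|\nabla f|$ there, this contribution equals $\int_\Sigma H_1\,dS$. Next I would treat the quadratic part $2\alpha P^{ijkl}g_{jk,l}$: by Lemma \ref{divergenceform} the bulk contributes $\alpha\int_{\cM}L_2/\sqrt{1+|\nabla f|^2}\,dV_g$ (after dividing the factor $\frac12$ from Lemma \ref{divergenceform} against the $2$ in $2\alpha$), and the inner boundary contributes $2\alpha\int_\Sigma P^{ijkl}f_{jl}f_k\nu_i\,dS$. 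The latter is exactly the quantity evaluated in Section 5: combining \eqref{sum} with \eqref{Rie.tensor}, \eqref{Ricci}, \eqref{Scalar} and the identity $-(\hat R^{\alpha\beta}-\tfrac12\hat R h^{\alpha\beta})A_{\alpha\beta}=3H_3$, and using $|\nabla f|\to\infty$ so that $(|\nabla f|^2/(1+|\nabla f|^2))^2\to1$ on $\Sigma$, one gets $-2\alpha\cdot3\int_\Sigma H_3\,dS$ before the sign flip from the boundary orientation, hence $2\alpha\cdot3\int_\Sigma H_3\,dS$ in the mass formula. Collecting these two computations together with the normalization $\frac1{2(n-1)\omega_{n-1}}$ of $m_{EGB}$ yields the asserted identity for $m_{ADM}$.

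For the inequality I would discard the bulk integral using $R+\alpha L_2\geq0$ and estimate the two boundary integrals on each convex component $\Sigma_i$. The Aleksandrov--Fenchel inequality \cite{AF}, in its classical Minkowski form, gives $\int_{\Sigma_i}H_1\,dS\geq(n-1)\omega_{n-1}(|\Sigma_i|/\omega_{n-1})^{\frac{n-2}{n-1}}$, so that $\frac1{2(n-1)\omega_{n-1}}\int_{\Sigma_i}H_1\,dS\geq\frac12(|\Sigma_i|/\omega_{n-1})^{\frac{n-2}{n-1}}$; and Lemma \ref{Alexsandrov} gives $\frac{\alpha}{(n-1)\omega_{n-1}}\int_{\Sigma_i}3H_3\,dS\geq\frac{\alpha(n-2)(n-3)}{2}(|\Sigma_i|/\omega_{n-1})^{\frac{n-4}{n-1}}$. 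Both contributions have the correct sign because $\alpha\geq0$, and the superadditivity of $t\mapsto t^{\frac{n-2}{n-1}}$ and $t\mapsto t^{\frac{n-4}{n-1}}$ lets one pass from $\sum_i|\Sigma_i|$ to $|\Sigma|$, which produces \eqref{new_P}.

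The step I expect to be the main obstacle is bookkeeping rather than conceptual: one must check that the inner-boundary orientation and all normalization constants are consistent between the scalar-curvature piece (derived à la Lam) and the GBC piece (computed in Section 5, where the boundary term appeared with the opposite sign $-c_2(n)\int_\Sigma P^{ijkl}\partial_l g_{jk}\nu_i\,dS$), and that the hypothesis $|\nabla f|\to\infty$ together with $\tau>\frac{n-2}{2}$ genuinely suppresses all lower-order boundary remainders. This is also where strict mean convexity enters through Remark \ref{rem5.1}: it guarantees both that $\Sigma$ is a true horizon and that $H_1>0$ and $H_3\geq0$ on $\Sigma$, which are precisely the hypotheses that the Aleksandrov--Fenchel inequalities require.
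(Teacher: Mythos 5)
Your proposal is correct and follows essentially the same route as the paper, which simply combines Lam's divergence identity for the scalar-curvature part with the boundary computation of Proposition \ref{penrose} for the $2\alpha P^{ijkl}g_{jk,l}$ part and then applies the Minkowski and Aleksandrov--Fenchel inequalities componentwise; your bookkeeping of the normalizations $\frac{1}{2(n-1)\omega_{n-1}}$ versus $c_2(n)$ and of the inner-boundary sign is accurate. The only slip is terminological: passing from $\sum_i|\Sigma_i|^{p}$ to $|\Sigma|^{p}$ for $p=\frac{n-2}{n-1},\frac{n-4}{n-1}\in(0,1)$ uses the \emph{subadditivity} of $t\mapsto t^{p}$ (i.e.\ $\sum_i a_i^{p}\ge(\sum_i a_i)^{p}$), not superadditivity, though the inequality you invoke points the right way.
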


\vspace{5mm}
\begin{proof} The proof follows from the proof given in \cite{Lam} and in the previous section. \end{proof} %above
%In view of (\ref{R}) and (\ref{L2}), we apply the divergence theorem and get an extra boundary integral. By a long, but direct computation, one
%\label\labelcan show that this boundary term is
%$$\frac {1}{2(n-1)\omega_{n-1}}\int_{\Sigma}\bigg(H+2\alpha\cdot 3H_3\bigg) dS.$$
%For the proof of (\ref{H}), the $H$ part comes from the boundary term of the scalar curvature and $H_3$ part from the  boundary term of $L_2$ curvature. The %second statement follows from the standard Alekasndrov-Fenchel inequality \cite{Schneider}. We omit the proof here. For more details, see \cite{Lam} and %\cite{GWW}.
%\end{proof}
%\begin {rema} By the work of Guan-Li \cite{GL} one can reduce the assumption of convexity of $\Sigma$ to the assumption that $\Sigma$
%is of star-shape, $H>0$, $R>0$ and $H_3$ is non-negative. See also the related work of \cite{CW}.
%\end{rema}

This Penrose type inequality is also optimal. i.e. equality in \eqref{new_P} holds at the following Schwarzschild-type metric  (See \cite{BD} and \cite{CO})
\begin{equation}\label{xmetric}
g=\bigg(1+\frac{r^2}{\tilde{\alpha}}\bigg\{1-\sqrt{1+\frac{4\tilde{\alpha}m}{r^n}}\bigg\}\bigg)^{-1}dr^2+r^2d\Theta^2,
\end{equation}
where $\tilde{\alpha}=2(n-2)(n-3)\alpha$ and  $d\Theta^2$ is the standard metric on ${\mathbb S}^{n-1}$.
This metric can be also represented as a graph which satisfies all conditions in Theorem \ref{Penrose}.
Its  horizon $\Sigma$ is the surface $\{r=r_0\}$ with $r_0$ being the solution of
$$1+\frac{r^2}{\tilde{\alpha}}\bigg(1-\sqrt{1+\frac{4\tilde{\alpha}m}{r^n}}\bigg)=0,$$
which yields
\begin{equation}\label{add_z}
\begin{array}{rcl}
m&=&\ds\vs \frac 12r_0^{n-2}+\frac {\tilde{\alpha}}{4} r_0^{n-4}\\
&=&\ds \frac 12\bigg(\frac{|\Sigma|}{\omega_{n-1}}\bigg)^{\frac{n-2}{n-1}}+\frac {\alpha}{2}(n-2)(n-3)\bigg(\frac{|\Sigma|}{\omega_{n-1}}\bigg)^{\frac{n-4}{n-1}}.
\end{array}\end{equation}
%This gives equality in \eqref{new_P}.
A direct computation gives
\begin{eqnarray*}
R(g)&=&%-(n-1)\bigg[(n-2)\frac{2m}{r^{n}}\frac{2}{1+\sqrt{1+\frac{4\tilde{\alpha}m}{r^n}}}-\frac{2m}{r^{2n}}\frac{n}{(1+\sqrt{1+\frac{4\tilde \alpha m}{r^n}})^2}\frac{4\tilde \alpha m}{\sqrt{1+\frac{4\tilde \alpha m}{r^n}}}\bigg]\\
%&&+(n-1)(n-2)\frac{2m}{r^n}\frac{2}{1+\sqrt{1+\frac{4\tilde \alpha m}{r^n}}}\\&=&
\frac{8n(n-1)\tilde \alpha m^2}{r^{2n}\big(1+\sqrt{1+\frac{4\tilde \alpha m}{r^n}}\big)^2\sqrt{1+\frac{4\tilde\alpha m}{r^n}}}.
\end{eqnarray*}
Recall $\tilde{\alpha}=2(n-2)(n-3)\alpha$, hence $R>0$ if $\alpha>0$ and $R<0$ if $\alpha <0$.
One can also check  that the  metric (\ref{xmetric}) has $R+\alpha L_2=0$. In fact this metric is a black hole
solution in the Einstein Gauss-Bonnet theory, see \cite{BD} and \cite{Cai}.
%Thus the proof is completed from Theorem \ref{Penrose}.
Moreover, we have

\begin{prop}
The EGB mass of the above metric is equal to $m$, i.e.,
\[m_{EGB}=m_{ADM}=m=\frac 12\bigg(\frac{|\Sigma|}{\omega_{n-1}}\bigg)^{\frac{n-2}{n-1}}+\frac {\alpha}{2}(n-2)(n-3)\bigg(\frac{|\Sigma|}{\omega_{n-1}}\bigg)^{\frac{n-4}{n-1}}.\]
\end{prop}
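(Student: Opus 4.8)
The plan is to deduce the identity from Theorem \ref{Penrose} rather than by a head-on computation of $m_{EGB}$. The metric (\ref{xmetric}) meets the hypotheses of that theorem: it is realized as the graph of an asymptotically flat function over $\mathbb{R}^n\setminus\Omega$ with $|\nabla f|\to\infty$ on the horizon $\Sigma=\{r=r_0\}$, and, as recorded above, it satisfies $R+\alpha L_2\equiv 0$. The first thing I would do is pin down the decay: with $t=4\tilde\alpha m\,r^{-n}$ and $\sqrt{1+t}=1+\tfrac t2+O(t^2)$, expanding (\ref{xmetric}) at infinity gives
\[
\Big(1+\tfrac{r^2}{\tilde\alpha}\big\{1-\sqrt{1+4\tilde\alpha m r^{-n}}\big\}\Big)^{-1}=1+\frac{2m}{r^{n-2}}+O\big(r^{-2(n-2)}\big),
\]
so in Cartesian coordinates $g_{ij}=\delta_{ij}+(h(r)-1)\,x^ix^j/r^2$ with $h(r)-1=O(r^{-(n-2)})$; thus $g$ is asymptotically flat of order $\tau=n-2>\frac{n-2}{2}$. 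That is exactly the decay under which the identity (\ref{id}) holds, so $m_{EGB}=m_{ADM}$, and it is the decay Theorem \ref{Penrose} requires.

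Because $R+\alpha L_2\equiv 0$, Theorem \ref{Penrose} then collapses to the single boundary term
\[
m_{ADM}=\frac{1}{2(n-1)\omega_{n-1}}\int_\Sigma\big(H_1+6\alpha H_3\big)\,dS .
\]
The remaining work is an elementary computation on a round sphere: $\Sigma=\{r=r_0\}$ has all principal curvatures equal to $1/r_0$, so $H_1=(n-1)/r_0$, $H_3=\tfrac{(n-1)(n-2)(n-3)}{6\,r_0^{3}}$, and $|\Sigma|=\omega_{n-1}r_0^{n-1}$; substituting and simplifying gives
\[
m_{ADM}=\frac12 r_0^{n-2}+\frac{\alpha(n-2)(n-3)}{2}\,r_0^{n-4}.
\]
Recalling $\tilde\alpha=2(n-2)(n-3)\alpha$, this is precisely the value of $m$ displayed in (\ref{add_z}); rewriting $r_0=(|\Sigma|/\omega_{n-1})^{1/(n-1)}$ yields the closed form in the statement, and combining with $m_{EGB}=m_{ADM}$ finishes the proof.

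There is no deep obstacle here — the argument is a decay/curvature verification followed by an elementary integral — but the step that genuinely needs care is confirming that (\ref{xmetric}) satisfies both the decay hypothesis behind (\ref{id}) and the hypotheses of Theorem \ref{Penrose}, and that $R+\alpha L_2=0$; without these the reduction to the single horizon term is not justified. As a consistency check one could instead compute $m_{ADM}$ directly from (\ref{m1}) by writing (\ref{xmetric}) in the Schwarzschild-type form $g=\big(1-2\mu(r)r^{2-n}\big)^{-1}dr^2+r^2d\Theta^2$ with $\mu(r)=\tfrac{r^n}{2\tilde\alpha}\big(\sqrt{1+4\tilde\alpha m r^{-n}}-1\big)$ and noting $\mu(r)\to m$ as $r\to\infty$.
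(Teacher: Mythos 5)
Your argument is correct, but it takes a genuinely different route from the paper's. The paper proves the Proposition entirely at infinity: it rewrites (\ref{xmetric}) as $g=\big(1-\frac{2m}{r^{n-2}}\cdot\frac{2}{1+\sqrt{1+4\tilde{\alpha}mr^{-n}}}\big)^{-1}dr^2+r^2d\Theta^2$, observes that this differs from the ordinary Schwarzschild metric (\ref{Sch}) by terms decaying fast enough not to contribute to the surface integral (\ref{m1}), and concludes $m_{ADM}(g)=m_{ADM}(g_{\rm Sch})=m$; the closed form in terms of $|\Sigma|$ is then read off from (\ref{add_z}), exactly as in your last step. You instead evaluate the mass at the horizon: since $R+\alpha L_2\equiv 0$, the bulk term in the mass identity of Theorem \ref{Penrose} drops out and the mass reduces to $\frac{1}{2(n-1)\omega_{n-1}}\int_{\Sigma}(H_1+6\alpha H_3)\,dS$ over the round sphere $r=r_0$, which you compute correctly with $H_1=(n-1)/r_0$, $H_3=\frac{(n-1)(n-2)(n-3)}{6}r_0^{-3}$ and $|\Sigma|=\omega_{n-1}r_0^{n-1}$. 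Both routes rest on the decay $\tau=n-2$ and the identity (\ref{id}). The paper's route is more self-contained: it needs only the asymptotic expansion and the standard value of the Schwarzschild ADM mass, and does not depend on the assertions---stated but not verified in detail in the paper---that (\ref{xmetric}) is a graph satisfying every hypothesis of Theorem \ref{Penrose} and that $R+\alpha L_2=0$, nor on Theorem \ref{Penrose} itself, whose proof is only sketched. Your route buys a conceptual point the paper's does not make explicit: it shows exactly where equality in (\ref{new_P}) comes from, namely that the round horizon saturates the Aleksandrov--Fenchel inequalities so that the whole chain collapses. If you want your version to stand alone, record the one-line verification that (\ref{xmetric}) is the graph of a radial function with $f_r=\sqrt{h(r)^{-1}-1}$, where $h(r_0)=0$ forces $|\nabla f|\to\infty$ on $\Sigma$, so the hypotheses of Theorem \ref{Penrose} are indeed met.
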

\begin{proof}
By a reduction, metric (\ref{xmetric}) becomes
$$g=\bigg(1-\frac{2m}{r^{n-2}}\frac{2}{1+\sqrt{1+\frac{4\tilde{\alpha}m}{r^n}}}\bigg)^{-1}dr^2+r^2d\Theta^2,$$
which is an asymptotically flat metric of decay order $\tau=n-2.$ Thus $m_{EGB}(g)=m_{ADM}(g)$.
Comparing with the ordinary Schwarzschild solution
\begin{equation}\label{Sch}
g_{\rm Sch}=\bigg(1-\frac{2m}{r^{n-2}}\bigg)^{-1}dr^2+r^2d\Theta^2,
\end{equation}
we know that the difference is $$g-g_{Sch}=O(r^{-2n-2}),$$
which has   a decay fast enough and hence it does not contribute to the ADM mass.
Therefore we have
$$m_{ADM}(g)=m_{ADM}(g_{\rm Sch})=m,$$ which follows from the fact that the ADM mass of the Schwarzschild metric (\ref{Sch}) is exactly equal to $m$. One can also  compute it directly. The rest follows from \eqref{add_z}.
\end{proof}
Therefore, the Penrose inequalities in Theorem \ref{Penrose} are optimal. This is a new  Penrose type inequality for the ADM mass
under the assumption that $R+\alpha L_2\ge 0.$

\section{Generalization, Problems and conjectures}

First of all, we can generalize our results to $k<n/2$.
In the definition of the GBC mass, in the proof of its geometric invariance and in the proof of
the positive mass theorem for graphs over $\R^n$ one can see that the crucial things  are
the divergence-free property and the symmetry (and also anti-symmetry) of the tensor $P$. Hence,  with a completely same argument, we can
define
a mass for $L_k$-curvature for any $k<n/2$. For general $L_k$ curvature, the corresponding $P_{(k)}$ curvature is
\begin{equation}\label{Pk}
P_{(k)}^{st lm}:=\frac{1}{2^k}\d^{i_1i_2\cdots i_{2k-3}i_{2k-2}st}
_{j_1j_2\cdots  j_{2k-3}j_{2k-2} j_{2k-1}j_{2k}}{R_{i_1i_2}}^{j_1j_2}\cdots
{R_{i_{2k-3}i_{2k-2}}}^{j_{2k-3}j_{2k-2}}g^{j_{2k-1}l}g^{j_{2k}m}.
\end{equation}
We can define a mass for $1\le k<n/2$ by
\begin{equation}\label{mk}
m_k=c(n,k)\lim_{r\rightarrow\infty}\int_{S_r}P_{(k)}^{ijml}\partial_l g_{jm}\nu_{i}dS,\end{equation}
with a dimensional constant
$$c(n,k)=\frac{(n-2k)!}{2^{k-1}(n-1)!\;\omega_{n-1}}.$$
 This constant can be determined by computing Example \ref{Schwarzschild} such that the mass $m_k=m^k$.
Remark that for even $k$, the Gauss-Bonnet-Chern mass $m_k$ of the metric $g^{(k)}_{\rm Sch}$ is positive even for negative $m$.
One can  check that $P_{(k)}$ has the same  divergence-free property  and the same symmetry (and also anti-symmetry) as the tensor $P$.
It is clear that ${P_{(2)}}=P$ and since $R=\frac12(g^{il}g^{jm}-g^{im}g^{jl})R_{ijlm}$, we have
\[P_{(1)}^{ijlm}=\frac 12 (g^{il}g^{jm}-g^{im}g^{jl}).\]
If we use this tensor $P_{(1)}$ to define a mass, it is just the ADM mass, with a slightly different, and certainly equivalent form
\[\frac1{2(n-1)\omega_{n-1}}\lim_{r\to \infty}\int_{S_r}(g^{il}\partial_jg_{jl}-g^{jl}\partial_ig_{jl})\nu_i dS.\]
However, it is interesting to see that with this form one can directly compute to obtain for the ADM mass $m_1$ that
\[m_1=\frac 1{2(n-1)\omega_{n-1}}\lim_{r\to \infty}\int_{S_r}\frac 1{1+|\nabla f|^2}(f_{ii}f_i-f_{ij}f_i)\nu_jdS\]
without using a trick in the proof of Theorem 5 in \cite{Lam} by adding a factor $1\slash  (1+|\nabla f|^2)$. This is the reason why we do not need to use this trick in our proof
of Theorem \ref{mainthm2}.\\

With the same crucial property of $P_{(k)}$, we can show the positive mass theorem  and the Penrose inequality for $m_k$ in the case of graphs, provided that the decay order satisfies
\[\tau>\frac{n-2k}{k+1}.\]

Moreover, using the Gauss-Bonnet curvature
$L_2$ (and also $L_k$ $(k<n/2)$) we can also introduce a GBC mass $m_2^H$ for asymptotically hyperbolic manifolds in \cite{GWW2}.
The study of  the ADM mass for asymptotically hyperbolic manifolds was initiated  by X. Wang \cite{Wang} and  Chru\'sciel-Herzlich \cite{CH}. See also \cite{ZhangX}. There are many interesting generalizations.
 Here we just mention
the recent work of Dahl-Gicquaud-Sakovich  \cite{DGS} and  Lima-Gir\~ao \cite{dLG3} for  asymptotically hyperbolic graphs.
In \cite{GWW2} we obtained a positive mass theorem for $m_2^H$  for asymptotically hyperbolic graphs
provided $$ L_2(g)+2(n-2)(n-3)R(g)\ge L_2(g_{{\mathbb H}^n})+2(n-2)(n-3)R(g_{{\mathbb H}^n}),$$
where $g_{{\mathbb H}^n}$ is the standard hyperbolic metric. A Penrose type inequality was also obtained.

\

 There are many interesting problems we would like to consider for the new mass.

First all, it would be an interesting problem to consider the relationship between the Gauss-Bonnet-Chern mass and the Gauss-Bonnet-Chern theorem. The mass defined in Definition \ref{m2}
can be also defined for $n=4$. In this case, the decay order (\ref{eq0.1}) needs
\[\tau>0.\]
However, this decay condition forces the $m_2$ mass vanishing. Nevertheless, it is interesting to use it to consider  asymptotically conical manifolds
 in dimension 4.
There are interesting results about the Gauss-Bonnet-Chern theorem on higher dimensional, noncompact manifolds using $Q$-curvature initiated by Chang-Qing-Yang \cite{CQY}.

It would be interesting to ask if Theorem \ref{mainthm2} is true for general asymptotically flat manifolds

\

\noindent \textbf {Problem 1.}  {\it Is the GBC mass $m_2$ nonnegative for an asymptotically flat manifold with $\tau>\frac {n-4}3$  and $L_2(g)\ge 0$.}

\

We conjecture that this is true, at least under an additional condition that the scalar curvature $R$ is nonnegative.
The Schwarzschild metric (\ref{metric}) has $L_2=0$ and $R>0$. It would be already interesting
if one can show its nonnegativity  for locally conformally flat manifolds. We can generalize Theorem \ref{mainthm2} to show the nonnegativity
of $m_2$
for a class of  hypersurfaces in a manifold with a certain product structure.   This is related to the recent work of  Lima \cite{dLG1} and \cite{HW}
for the ADM mass. This, together with a positivity result for conformal flat metrics in ${\mathbb R}^n$, will be presented in a forthcoming paper
\cite{GWW3}.

The GBC mass $m_2$ is closely related to  the $\sigma_2$ Yamabe problem.  With a suitable definition of the Green function for
the $\sigma_2$ Yamabe problem, one would like to ask the existence of the Green function and its expansion. The leading term of the regular part in the expansion of the Green function  should  be closely related to the
mass $m_2$. Metric (\ref{metric}) does provide such an example.
For the relationship between the ADM mass, the expansion of the ordinary Green function and
the resolution of the ordinary Yamabe problem,
see \cite{Schoen} and \cite{LP}.

\

\noindent \textbf {Problem 2.} {\it Is there the rigidity result?}

\

Namely, is it true if  $m_2=0$, then $\cM=\R^n$?
Proposition 6.2 and Proposition 6.7 show that the rigidity result holds for two (very) special classes
of manifolds,   one is the class of  spherically symmetry and conformally  flat manifolds and another is the class of spherically symmetry graphs.  For these two classes of manifolds the  mass vanishes implies that the  manifold is isometric to the Euclidean space. Therefore,
 it is natural to conjecture that  the rigidity  result holds, at least under additional condition
 that its scalar curvature is nonnegative. This is a difficult problem, even in the case of  the asymptotically flat graphs.
In this case, it is in fact a Bernstein type problem. Namely, is there a non-constant function
satisfying
\begin{eqnarray}\label{addx2}
2L_2=P^{ijkl}\left(\frac{f_{ik}f_{jl}-f_{il}f_{jk}}{1+|\nabla f|^2}\right)=0,
\end{eqnarray}
under the decay conditions given in Definition \ref{f}? For the related results on the rigidity result of the ADM mass for graphs, see \cite{HW} and \cite{dLG2}.

\

\noindent \textbf {Problem 3.} {\it Does the Penrose inequality for the GBC mass hold on
general asymptotically flat manifolds?}

\

Theorem \ref{mainthm3} provides three inequalities. Two of them involve only intrinsic invariants, which
we consider as the generalized forms of the ordinary Penrose inequality \cite{HI} and \cite{Bray}.
Comparing with the ordinary Penrose inequality, we conjecture
$$m_2\geq \frac{1}{4}\bigg(\frac{|\Sigma|}{\omega_{n-1}}\bigg)^{\frac{n-4}{n-1}},$$
if $\Sigma$ is an area outer minimizing horizon and
$$m_2\geq \frac{1}{4}\bigg(\frac{\int_\Sigma R}{(n-1)(n-2)\omega_{n-1}}\bigg)^{\frac{n-4}{n-3}},$$
if $\Sigma$ is an outer minimizing horizon for the functional
\[\int_{\Sigma} R,\]
whose Euler-Lagrange equation is
\[E^1_{ij}B^{ij}=0.\]
 Here $E^1$ is the ordinary Einstein tensor.
For $m_k$ ($k<n/2$), one should have $k$ inequalities relating  to $m_k$ with
\[\int_{\Sigma}L_{j}(g)dv(g), \quad j=0,1,2,\cdots, k-1.\]
These functionals were considered in \cite{LiA} and \cite{Labbi}.  Note that $L_1=R$.

\

\noindent{\it Acknowledgment.} The GBC mass and results presented in this paper were reported in the conference  ``Geometric PDEs" in trimester ``Conformal and K\"ahler Geometry" in IHP, Paris  held on
Nov. 5-- Nov. 9, 2012.
Afterwards, Yanyan Li  informed us that  Luc Nguyen and  he also defined a mass by  using an invariant
which agrees with the $\sigma_k$ curvature when the manifold
is locally conformally flat and the mass is non-negative
under the assumption that the $\sigma_k $-curvature (or some
other curvature invariant) is non-negative, if in addition
that the manifold is locally conformally flat and the
$\sigma_k$-curvature is zero near infinity, together with a
rigidity in that case.
We would like to thank K. Akutagawa, B. Ammann, A. Chang,  P. Guan,  E. Kuwert, Yanyan Li, J. Qing, J. Viaclovsky and P. Yang for their interest and  questions,
which improve the writing of this paper.

\end{document}